\documentclass[reqno]{amsart}

\usepackage{amsmath}
\usepackage{mathtools}
\usepackage{amsthm,amssymb,color,comment}
\usepackage{bbm}
\usepackage{hyperref}
\usepackage[TS1,T1]{fontenc}
\usepackage{inputenc}
\usepackage{dsfont}
\usepackage{tikz}
\usepackage{enumitem}
\usepackage{cite}

\numberwithin{equation}{section}
\numberwithin{subsection}{section}

\theoremstyle{plain}
\newtheorem{thm}{Theorem}[section]

\newtheorem{lem}[thm]{Lemma}
\newtheorem{prop}[thm]{Proposition}

\theoremstyle{definition}
\newtheorem{defn}[thm]{Definition}
\theoremstyle{remark}

\newcommand{\norm}[1]{\left\Vert#1\right\Vert}
\newcommand{\abs}[1]{\left\vert#1\right\vert}
\newcommand{\set}[1]{\left\{#1\right\}}
\newcommand{\R}{\mathbb R}

\newcommand{\eps}{\varepsilon}

\newcommand{\diverg}{\operatorname{div}}

\title[Energy rigidity and weak-strong uniqueness]{Energy rigidity and weak-strong uniqueness for the 2D anisotropic Navier--Stokes equations}

\author{Josef Demmel
}
\address{Department of Mathematics, Friedrich-Alexander-Universit\"at Erlangen-N\"urnberg, Cauerstr.~11, 91058 Erlangen, Germany
}
\email{josef.demmel@fau.de
}
\author{Emil Wiedemann
}
\address{Department of Mathematics, Friedrich-Alexander-Universit\"at Erlangen-N\"urnberg, Cauerstr.~11, 91058 Erlangen, Germany
}
\email{emil.wiedemann@fau.de
}
\date{\today}

\keywords{Anisotropic Navier--Stokes equations, energy equality, weak solutions, renormalization.}
\subjclass[2020]{Primary 35Q30; Secondary 76D05, 35D30, 76D03.}

\begin{document}

\begin{abstract}
    In two dimensions, we show that dissipation in one spatial direction is sufficient to enforce the energy equality for every weak solution at the natural energy level. In particular, neither anomalous energy loss nor creation can occur. The main difficulty is that the missing directional regularity prevents the usual self-testing argument. We overcome this obstruction through two observations: The pressure is square-integrable by a directional Riesz-transform estimate, and the less regular component is still a renormalized solution. As an application of energy rigidity, we derive a weak-strong uniqueness principle.
\end{abstract}

\maketitle

\section{Introduction}
We consider the anisotropic, incompressible Navier--Stokes equations on $(0,T)\times\R^2$, which take the form
\begin{equation}\label{ans}
    \left\{
	\begin{aligned}
		&\partial_{t}u + \diverg(u\otimes u) + \nabla p = \partial_{x_1}^2 u, \\
		&\diverg u = 0,
	\end{aligned}\right.
\end{equation}
where $u:\R^2\times(0,T)\to\R^2$ is the velocity and $p:\R^2\times(0,T)\to\R$ the pressure. Since diffusion acts in only one direction, it can be seen as an intermediate model between the Euler equations, which have no dissipation ($=0$), and the (classical isotropic) Navier--Stokes equations, which have full dissipation ($=\Delta u$). From a purely mathematical viewpoint, this makes the analysis interesting, as one may ask whether certain properties of the Navier--Stokes equations are still true for the anisotropic system while they fail for the Euler equations. A foundational mathematical treatment of fluids with anisotropic dissipation was given by Chemin et al. \cite{chemin}. From a physical viewpoint, for instance, the anisotropic model has applications as an idealized prototype for oceanography, where additional terms are usually present that model effects like the rotation of the earth or wind forcing \cite{desjardins, grenier}. Moreover, \eqref{ans} is a special case of certain anisotropic Boussinesq systems (for temperature $\theta\equiv0$), which have received considerable attention in the past (see, for example, \cite{cao, danchin, larios, li, paicu}).\\

In analogy to the isotropic Navier--Stokes equations, testing formally with $u$ yields a natural anisotropic energy equality
\begin{equation}\label{energy-equality}
    \norm{u(t)}_{L^2(\R^2)}^2+2\int_s^t\norm{\partial_{x_1}u(\tau)}_{L^2(\R^2)}^2\;d\tau=\norm{u(s)}_{L^2(\R^2)}^2\text{ for every }0\leq s< t\leq T.
\end{equation}
Therefore, this motivates the study of weak solutions in the corresponding energy class. 
\begin{defn}\label{def-weak-sol}
    We call a vector field $u$ a weak solution to \eqref{ans} if 
    \begin{equation}\label{leray-hopf-regularity}
        u\in L^\infty(0,T;L^2(\R^2)),\quad\quad\partial_{x_1}u\in L^2(0,T;L^2(\R^2)),
    \end{equation}
    it is weakly divergence-free, i.e.,
    \begin{equation*}
        \int_{\R^2}u\cdot\nabla\psi\;dx=0
    \end{equation*}
    holds for all $\psi\in C^\infty_c(\R^2)$ and almost every $t$, and
    \begin{equation}\label{distributional-solution}
        \int_0^T \int_{\R^2}\partial_t\phi\cdot u + \nabla\phi:(u\otimes u)-\partial_{x_1}\phi\cdot \partial_{x_1}u \;dx\;dt=0    \end{equation}
    holds for every divergence-free $\phi\in C^\infty_c((0,T)\times\R^2)$.
\end{defn}
At the natural energy level, the regularity for the two-dimensional isotropic Navier--Stokes equations enforces the energy equality. In contrast, weak solutions to the Euler equations do not necessarily satisfy any energy law~\cite{delellis, scheffer}. Our main result now shows that the anisotropic system retains this energy rigidity: every weak solution in the sense of Definition \ref{def-weak-sol} satisfies the anisotropic energy equality.
\begin{thm}[Energy rigidity]\label{thm-energy-rigidity}
    Let $u$ be a weak solution to \eqref{ans}. Then $u$ has a unique representative in $C([0,T];L^2(\R^2))$, which satisfies the anisotropic energy equality \eqref{energy-equality}.
\end{thm}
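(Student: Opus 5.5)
The plan is to follow the abstract's hint and split the argument according to the two components. First we recover the pressure. Taking the divergence of \eqref{ans} gives
\[
-\Delta p=\diverg\diverg(u\otimes u)=\sum_{i,j}\partial_i\partial_j(u_iu_j),
\]
so $p=\sum_{i,j}R_iR_j(u_iu_j)$ with Riesz transforms $R_i$. Since $u_iu_j$ lies only in $L^\infty_tL^1_x$, we need directional information to reach $L^2$.

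The key identity comes from incompressibility: $\partial_2u_2=-\partial_1u_1\in L^2_{t,x}$. Thus \emph{every} first derivative that falls on $u_1$ or $u_2$ in a "good" direction is controlled. We rewrite
\[
\partial_i\partial_j(u_iu_j)=2\,\partial_1u_2\,\partial_2u_1+2(\partial_1u_1)^2
\]
(the standard 2D identity $\diverg\diverg(u\otimes u)=2(\partial_1u_1\partial_2u_2-\partial_1u_2\partial_2u_1)$). Arranging the terms so that one $\partial_1$ is always available to invert, we write $p$ as $\Delta^{-1}\partial_1(\text{stuff})$. Here "stuff" is of the form $u\,\partial_1u$ or $u_1\partial_2u_1$, and the latter equals $\partial_2(u_1^2)/2$. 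For example, $\partial_i\partial_j(u_iu_j)=\partial_1\bigl(\partial_1(u_1^2)+2\partial_2(u_1u_2)\bigr)+\partial_2^2(u_2^2)$ and $\partial_2(u_2^2)=2u_2\partial_2u_2=-2u_2\partial_1u_1$. Hence
\[
p=\Delta^{-1}\partial_1 F+\Delta^{-1}\partial_2 G
\]
with $F,G$ built from products like $u\,\partial_1u$ and $\partial_2(u_1u_2)$.

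Using the anisotropic estimate $\|\partial_1\Delta^{-1}\partial_k f\|_{L^2}\lesssim\|f\|_{L^2}$ together with a directional Gagliardo–Nirenberg/Ladyzhenskaya-type inequality, we then aim for $p\in L^2_{t,x}$. That inequality is
\[
\|u\|_{L^4}^2\lesssim\|u\|_{L^2}\,\|\partial_1u\|_{L^2}^{1/2}\,\|\partial_2 u\|^{1/2}\ldots
\]
This is the step I expect to be the main obstacle: $\partial_2u_1$ is uncontrolled, so the estimate must only use $\partial_1u$ and $\partial_2u_2$. I would try the anisotropic bound $\|f\|_{L^4}^2\lesssim\|f\|_{L^2}^{1/2}\|\partial_1f\|_{L^2}^{1/2}\|f\|_{L^2}^{1/2}\|\partial_2f\|_{L^2}^{1/2}$ for $f=u_2$, which is legitimate. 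For $u_1$ I would instead use the one-directional estimate $\|u_1\|_{L^2_{x_2}L^\infty_{x_1}}^2\lesssim\|u_1\|_{L^2}\|\partial_1u_1\|_{L^2}$, choosing a Riesz-type representation in which $u_1$ always appears in mixed norms $L^\infty_{x_1}L^2_{x_2}$. This is the "directional Riesz-transform estimate" of the abstract.

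Second, for the energy equality we test the equation with a mollification $u^\eps$. For $u_2$, which has both $\partial_1u_2$ and $\partial_2u_2$ in $L^2_{t,x}$, the standard argument works. For $u_1$ we instead use the renormalization property: $u_1$ solves the transport–diffusion equation
\[
\partial_tu_1+u\cdot\nabla u_1+\partial_1p=\partial_1^2u_1,
\]
with $\partial_1p\in L^2_tH^{-1}$ in the $x_1$ variable. The commutator $[\rho_\eps*,u\cdot\nabla]u_1$ only needs a DiPerna–Lions estimate in which the derivative falls on $u$. Writing $u\cdot\nabla u_1=\partial_1(u_1u_1)+\partial_2(u_2u_1)$, the $\partial_2$ falls on the product with $u_2$, whose $\partial_2u_2\in L^2$ is controlled. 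This gives a vanishing commutator, so $u_1^2$ satisfies the expected local identity. Combining this with the $u_2$ equation, the pressure term $\int\nabla p\cdot u=0$ is justified because $p\in L^2$ and $\diverg u=0$. We obtain \eqref{energy-equality} for a.e.\ $s,t$.

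Finally, for continuity: $u\in C_w([0,T];L^2)$ follows from the equation, with $\partial_tu$ in a negative Sobolev space, and the energy equality makes $t\mapsto\|u(t)\|_{L^2}$ continuous. Together these give strong continuity, a unique continuous representative, and the equality for every $s<t$ including $s=0$.
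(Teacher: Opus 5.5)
Your architecture matches the paper's: pressure in $L^2_tL^2_x$, direct testing for $u_2$, renormalization plus an anisotropic commutator for $u_1$, then weak continuity and continuity of the norm. However, the two steps that carry the content are not established.

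The first gap is the pressure bound, which comes down entirely to $R_1^2(u_1^2)$, and none of your tools reaches it. The estimate $\norm{\partial_1\Delta^{-1}\partial_k f}_{L^2}\lesssim\norm{f}_{L^2}$ is just $L^2$-boundedness of $R_1R_k$. Applied to the piece $\partial_1(u_1^2)$ of $F$, it needs $u_1^2\in L^2_x$, i.e.\ $u_1\in L^4_x$, which is exactly what is unknown. Rewriting that piece as $\Delta^{-1}\partial_1(2u_1\partial_1u_1)$ does not help, because $\Delta^{-1}\partial_1$ has order $-1$ and does not map the spaces where $u_1\partial_1u_1$ is known to lie ($L^1_x$, $L^1_{x_2}L^2_{x_1}$) into $L^2_x$. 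The bound $\norm{u_1}_{L^2_{x_2}L^\infty_{x_1}}^2\lesssim\norm{u_1}_{L^2}\norm{\partial_1u_1}_{L^2}$ is one-dimensional Gagliardo--Nirenberg, not the directional Riesz estimate. It handles $R_1R_2(u_1u_2)$, as in Lemma~\ref{prelim-regularity}, but not $u_1^2$. The missing idea is Lemma~\ref{directional-riesz}, $\norm{R_1(f^2)}_{L^2}\le C\norm{f}_{L^2}\norm{\partial_1 f}_{L^2}$. One bounds $\abs{\widehat{f^2}(\xi_1,\xi_2)}$ by a function $b(\xi_1)$ independent of $\xi_2$ and uses $\int_\R\xi_1^2(\xi_1^2+\xi_2^2)^{-1}\,d\xi_2=\pi\abs{\xi_1}$, so only half an $x_1$-derivative is needed. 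Your mixed norms can in fact be completed this way:
\begin{itemize}
\item $u_1^2$ and $\partial_1(u_1^2)$ lie in $L^1_{x_2}L^2_{x_1}$ with norms $\lesssim\norm{u_1}_{L^2}^{3/2}\norm{\partial_1u_1}_{L^2}^{1/2}$ and $\lesssim\norm{u_1}_{L^2}^{1/2}\norm{\partial_1u_1}_{L^2}^{3/2}$;
\item Minkowski's inequality for the partial Fourier transform in $x_1$ yields such a $b$ with $b,\xi_1b\in L^2$;
\item then $\int b^2\abs{\xi_1}\,d\xi_1\le\norm{b}_{L^2}\norm{\xi_1 b}_{L^2}$.
\end{itemize}
Without this step you have neither $p\in L^2_tL^2_x$ nor $\partial_tu\in L^2_tH^{-1}_x$. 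The testing argument for $u_2$ and the cancellation of $\int p\,\partial_1u_1$ then have nothing to rest on.

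The second gap is in the $u_1$ step. The commutator $R_\eps=\diverg(u_1^\eps u-(u_1u)^\eps)$ tends to $0$ only in $L^1_x$. Its $x_1$- and $x_2$-parts converge separately to $-u_1\partial_1u_1$ and $-u_1\partial_2u_2$ and cancel by incompressibility; the $x_2$-part uses the full $\nabla u_2$, since the mollifier is isotropic. To get an identity for $u_1^2$ directly, you would have to pass to the limit in $\int u_1^\eps R_\eps\,dx$. When removing spatial cutoffs you would also have to control the flux $\nabla\chi_R\cdot u\,(u_1^\eps)^2$. Both produce cubic quantities such as $u_1^2\partial_1u_1$, $u_1^2\nabla u_2$ and $\abs{u}u_1^2$, which are not known to be integrable. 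So ``$u_1^2$ satisfies the expected local identity'' does not follow from the commutator estimate.

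The paper instead renormalizes with $\beta\in C^2$, $\beta(0)=\beta'(0)=0$, and $\beta',\beta''$ bounded (Proposition~\ref{renormalizatin-u-1}). Then $\beta'(u_1^\eps)R_\eps\to0$ in $L^1$, the flux $\beta(u_1^\eps)u$ is integrable, and the limiting identity involves only $\beta(u_1)$, $\beta''(u_1)(\partial_1u_1)^2$ and $p\,\beta''(u_1)\partial_1u_1$. Only afterwards does one take $\beta_R''=\psi(\cdot/R)$, so that $\beta_R\to z^2/2$ and $\beta_R''\to1$. One then lets $R\to\infty$ by dominated convergence, with $u_1\in L^\infty_tL^2_x$ and $p,\partial_1u_1\in L^2_tL^2_x$ supplying the majorants. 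This truncation is the reason for renormalizing, and your sketch skips it.
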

At first glance, this might seem as an obvious implication, but there is a catch. In contrast to the isotropic system, we have insufficient regularity in our setting to test the system with itself. Although we will show that \eqref{ans} can be interpreted with values in $H^{-1}(\R^2))$, the usual duality argument cannot be applied. Indeed, from the divergence constraint we know $\partial_{x_2}u_2=-\partial_{x_1}u_1\in L^2(\R^2)$, but we lack information about $\partial_{x_2}u_1$ and therefore cannot guarantee that $u$ belongs to $H^1(\R^2)$. Standard approximation schemes, like Galerkin, do not bypass this difficulty as they lack compactness from the missing control of $\partial_{x_2}u_1$ (see also the discussion in \cite{danchin}). \\

So far, we have not specified initial data. The natural choice is solenoidal $u_0\in L^2(\R^2)$. But in this case neither existence nor uniqueness, for arbitrary datum, is currently known due to the aforementioned difficulties. One way to overcome this obstacle is to consider more regular initial data. Existence and uniqueness have been shown in \cite{liang, zhou}, among other things, if one assumes the lacking regularity in the initial data, i.e., $\partial_{x_2}u_{0}\in L^2(\R^2)$ (Zhou and Wu work with the stronger assumption $u_{0}\in H^1(\R^2)$). Related well-posedness results, for essentially the same initial condition, have recently been shown in \cite{cao-guo} on channel domains with hard-wall boundary conditions. Within the isotropic scaling, Zhou and Wu \cite{zhou} even argue that $H^1$-regularity is heuristically sharp for uniqueness. Our second result, which follows from the anisotropic energy equality \eqref{energy-equality}, shows that it suffices to have the additional regularity, $\partial_{x_2}u_1\in L^2(0,T;L^2(\R^2))$, in one of the two competing solutions. More precisely, we show the following weak-strong uniqueness principle.
\begin{thm}[Weak-strong uniqueness]\label{weak-strong-uniqueness}
    Let $u,U$ be weak solutions to \eqref{ans} with \\$\partial_{x_2}U_1\in L^2(0,T;L^2(\R^2))$. If $u(0)=U(0) \text{ in }L^2(\R^2)$, then
    \begin{equation*}
        u(t)=U(t) \text{ in }L^2(\R^2) \text{ for every } t\in(0,T].
    \end{equation*}
\end{thm}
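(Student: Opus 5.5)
We follow the classical relative-energy (Serrin/Prodi-type) argument, using Theorem~\ref{thm-energy-rigidity} for both $u$ and $U$. Write
\[
\|u(t)-U(t)\|_{L^2}^2=\|u(t)\|_{L^2}^2+\|U(t)\|_{L^2}^2-2\int_{\R^2}u(t)\cdot U(t)\,dx .
\]
By Theorem~\ref{thm-energy-rigidity}, both $u$ and $U$ have continuous representatives and satisfy \eqref{energy-equality}. The first two terms are therefore expressed through the initial energies and the dissipations $2\int_0^t\|\partial_{x_1}u\|^2$ and $2\int_0^t\|\partial_{x_1}U\|^2$. The remaining task is an identity for the cross term $\int u\cdot U$.

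\textbf{Cross-term identity.} We test the equation of $u$ with $U$ and the equation of $U$ with $u$, after mollifying in space and time and cutting off in space. The goal is to pass to the limit and obtain
\[
\int u\cdot U\,dx\Big|_0^t+2\int_0^t\!\!\int\partial_{x_1}u\cdot\partial_{x_1}U
=\int_0^t\!\!\int\bigl[(u\otimes u):\nabla U+(U\otimes U):\nabla u\bigr].
\]
Two facts make the limits work.

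\begin{itemize}
\item $U$ has full gradient in $L^2_tL^2_x$: $\partial_{x_2}U_2=-\partial_{x_1}U_1$, and $\partial_{x_1}U$, $\partial_{x_2}U_1$ are in $L^2L^2$ by assumption. Hence $U\in L^2H^1\cap L^\infty L^2$, and by Ladyzhenskaya $U\in L^4_{t,x}$.
\item In $(U\otimes U):\nabla u$, the bad derivative $\partial_{x_2}u_1$ appears only as $U_2U_1\partial_{x_2}u_1$. We move the derivative onto $U$ by integrating by parts. Using $\diverg U=0$, $\int (U\cdot\nabla U)\cdot u=-\int (U\cdot\nabla u)\cdot U$, so this term becomes $-\int (U\cdot\nabla U)\cdot u$, which is controlled by $\|U\|_{L^4}\|\nabla U\|_{L^2}\|u\|_{L^4}$.
\end{itemize}

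This last bound still needs some integrability of $u$ beyond $L^2$. We obtain it from anisotropic Ladyzhenskaya/Gagliardo--Nirenberg inequalities, for instance $\|f\|_{L^4}^2\lesssim \|f\|_{L^2}\|\partial_{x_1}f\|_{L^2}^{1/2}\|\partial_{x_2}f\|_{L^2}^{1/2}$ or an $L^2_{x_2}L^\infty_{x_1}$ bound, distributing the missing $x_2$-derivatives onto $U$. The pressures drop out against divergence-free test functions. Alternatively, they are handled through the $L^2$ pressure bound that the paper establishes, which is what makes the time-mollification limit rigorous.

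\textbf{Closing the argument.} Combining the three identities and setting $w=u-U$, the standard algebra gives
\[
\|w(t)\|^2+2\int_0^t\|\partial_{x_1}w\|^2=-2\int_0^t\!\!\int (w\cdot\nabla U)\cdot w ,
\]
using $\int (u\cdot\nabla w)\cdot w=0$.

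The right-hand side must be estimated using only $\partial_{x_1}w$ and the full gradient of $U$. We split
\[
(w\cdot\nabla U)\cdot w=w_1\,\partial_{x_1}U\cdot w+w_2\,\partial_{x_2}U\cdot w .
\]
The troublesome parts involve $\partial_{x_2}U$, whose second component satisfies $\partial_{x_2}U_2=-\partial_{x_1}U_1$. In the term $w_2w_1\partial_{x_2}U_1$ we integrate by parts in $x_1$ and in $x_2$ as needed, and use $\partial_{x_2}w_2=-\partial_{x_1}w_1$, so that each term carries at most one $\partial_{x_1}w$. We then bound via anisotropic estimates such as
\[
\|w\|_{L^\infty_{x_1}L^2_{x_2}}\lesssim\|w\|^{1/2}\|\partial_{x_1}w\|^{1/2},
\]
obtaining
\[
|RHS|\le \int_0^t \|\partial_{x_1}w\|^2+C\int_0^t\bigl(1+\|\nabla U\|_{L^2}^2+\|\partial_{x_2}U\|_{L^2}^2\bigr)\|w\|^2 .
\]
Gronwall with $w(0)=0$ and an integrable coefficient then gives $w\equiv0$.

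\textbf{Main obstacle.} The main obstacle is this last trilinear bound, specifically the term $\int w_1w_2\,\partial_{x_2}U_1$. It must be controlled by $\|\partial_{x_1}w\|\,\|w\|\,\|\nabla U\|$ without any $\partial_{x_2}w$. We would first try Hölder in the form $L^\infty_{x_1}L^2_{x_2}\times L^2_{x_1}L^\infty_{x_2}$, placing the $x_2$-supremum on the $U$-factor. If that fails, we would integrate by parts in $x_2$ and use the divergence relation for $\partial_{x_2}w_2$. Justifying the cross-term identity, where $u$ alone lacks $L^4$, is the secondary technical difficulty.
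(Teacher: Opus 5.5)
Your architecture is the same as the paper's: the energy equality for both solutions, a mollified cross-term identity, the balance $\|w(t)\|_{L^2_x}^2+2\int_0^t\|\partial_{x_1}w\|_{L^2_x}^2\,d\tau=-2\int_0^t\int_{\R^2}(w\cdot\nabla U)\cdot w\,dx\,d\tau$, and then Gr\"onwall. However, the step you flag as the main obstacle is left open, and neither of your two suggestions closes it.

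In $\int w_1w_2\,\partial_{x_2}U_1$ the $U$-factor is $\partial_{x_2}U_1$, which is only in $L^2_x$, so putting an $x_2$-supremum on it would require $\partial_{x_2}^2U_1$. Integrating by parts in $x_2$ instead produces $\int U_1w_2\,\partial_{x_2}w_1$. But $\partial_{x_2}w_1$ is exactly the derivative no weak solution controls, and the divergence relation only handles $\partial_{x_2}w_2$.

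The missing idea is Lemma~\ref{prelim-regularity} applied to $w$. Since $w$ is divergence-free in the energy class, $w_2$ has a full gradient with $\partial_{x_2}w_2=-\partial_{x_1}w_1$. So the $x_2$-supremum belongs on $w_2$ and the $x_1$-supremum on $w_1$:
\[
\|w_1w_2\|_{L^2_x}\le\|w_1\|_{L^\infty_{x_1}L^2_{x_2}}\|w_2\|_{L^2_{x_1}L^\infty_{x_2}}\le C\|w_1\|_{L^2_x}^{1/2}\|w_2\|_{L^2_x}^{1/2}\|\partial_{x_1}w_1\|_{L^2_x}.
\]
Ladyzhenskaya gives $\|w_2^2\|_{L^2_x}\le C\|w_2\|_{L^2_x}\|\partial_{x_1}w\|_{L^2_x}$. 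Write $(w\cdot\nabla U)\cdot w=(w_1^2-w_2^2)\partial_{x_1}U_1+w_1w_2(\partial_{x_2}U_1+\partial_{x_1}U_2)$. These two bounds control every term except $w_1^2\partial_{x_1}U_1$ by $\|\nabla U\|_{L^2_x}$ times something Young can absorb. That last term has no direct bound, since $w_1$ is not known to lie in $L^4_x$. You must first integrate by parts in $x_1$, giving $\int w_1^2\partial_{x_1}U_1=-2\int w_1\,\partial_{x_1}w_1\,U_1$. Only then do you place the $x_2$-supremum on $U_1$, via $\|U_1\|_{L^2_{x_1}L^\infty_{x_2}}\le C\|U_1\|_{L^2_x}^{1/2}\|\partial_{x_2}U_1\|_{L^2_x}^{1/2}$.

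The same devices are needed to make your cross-term identity rigorous. $u_1$ is not known to lie in $L^4_x$, so the bound $\|U\|_{L^4}\|\nabla U\|_{L^2}\|u\|_{L^4}$ is unavailable. Your mixed-norm fallback does handle $\int(U\cdot\nabla U)\cdot u$. In $\int(u\otimes u):\nabla U$, however, the term $u_1^2\partial_{x_1}U_1$ must again be rewritten as $-2u_1\partial_{x_1}u_1U_1$ before passing to the limit.

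The paper needs only space mollification here. Since $\partial_tu,\partial_tU\in L^2_tH^{-1}_x$, one has $u^\eps,U^\eps\in W^{1,2}(0,T;L^2_x)$, and the pressure drops out because $u^\eps,U^\eps$ are divergence-free.

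Finally, the cancellation $\int(u\cdot\nabla w)\cdot w=0$ that you invoke involves $\partial_{x_2}w_1$ and is not meaningful at this regularity. Use instead $\int(w\cdot\nabla U)\cdot U=\tfrac12\int w\cdot\nabla|U|^2=0$, which involves only $\nabla U$.
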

This means that the $H^1$-regularity solution is unique among the potentially much larger class of weak solutions. For an overview of weak-strong uniqueness results in fluid dynamics, we refer to \cite{wiedemann}.
\subsection{Proof strategy}
We start with Theorem \ref{thm-energy-rigidity}. The general idea is not to show the missing regularity $\partial_{x_2}u_1\in L^2(0,T;L^2(\R^2))$, it likely does not even hold in general, instead we work out a way such that it is not required. The first milestone is that we will show there exists a unique pressure $p\in L^2(0,T;L^2(\R^2))$ for every weak solution $u$ of \eqref{ans}. The idea is to consider the Poisson equation for the pressure, $p=R_1^2(u_1^2)+2R_1R_2(u_1u_2)+R_2^2(u_2^2)$, where $R_i$ is the Riesz transform in the $i$-th variable, and prove that the right hand-side is in $L^2(0,T;L^2(\R^2))$. The key tool here is a directional Riesz estimate for $u_1$,
\begin{equation*}
    \norm{R_1(u_1^2)}_{L^2(\R^2)}\leq C\norm{u_1}_{L^2(\R^2)}\norm{\partial_{x_1}u_1}_{L^2(\R^2)}.
\end{equation*}
With square-integrable pressure, we will show the energy equality \eqref{energy-equality} component by component:
\begin{equation*}
    \norm{u_2(t)}_{L^2(\R^2)}^2+2\int_s^t\norm{\partial_{x_1}u_2(\tau)}_{L^2(\R^2)}^2\;d\tau=\norm{u_2(s)}_{L^2(\R^2)}^2-2\int_s^t\int_{\R^2}p\partial_{x_1}u_1\;dx\;d\tau,
\end{equation*}
and
\begin{equation*}
    \norm{u_1(t)}_{L^2(\R^2)}^2+2\int_s^t\norm{\partial_{x_1}u_1(\tau)}_{L^2(\R^2)}^2\;d\tau=\norm{u_1(s)}_{L^2(\R^2)}^2+2\int_s^t\int_{\R^2}p\partial_{x_1}u_1\;dx\;d\tau.
\end{equation*}
For $u_2$, we have control over the whole gradient. Therefore, we can use the formal argument by testing the equation with itself. For $u_1$, the central new observation is that it is still a renormalized solution; see Proposition \ref{renormalizatin-u-1} for the precise definition. To prove this, we establish a commutator estimate and exploit the square-integrability of the pressure. With the renormalization property, the energy balance for $u_1$ results from a dominated convergence argument. Finally, the unique representative $u\in C([0,T];L^2(\R^2))$ is a consequence of the energy equality.\\

The proof of Theorem \ref{weak-strong-uniqueness} is a classical comparison argument for $w:=u-U$. The crucial tool here is the just proven energy equality \eqref{energy-equality} for weak solutions. The additional regularity of $U$ is sufficient to control the nonlinear terms. The resulting relative energy inequality closes by Gr\"onwall's lemma.
\subsection*{Declaration of AI Use} During the research, the authors used OpenAI’s GPT-5.5 Pro and GPT-5.6 Sol to generate candidate proof strategies and intermediate arguments for Theorem \ref{thm-energy-rigidity}. The authors independently reconstructed and verified every step, checked all references against the original sources, wrote the manuscript themselves, and accept full responsibility for its mathematical content.
\section{Preliminaries}
For $p,q\in[1,\infty]$ and $s\in\R$, we will write $L_t^p=L^p(0,T)$, $L_x^q=L^q(\R^2)$, $H^s_x=H^s(\R^2)$ and $L^1_{loc}=L^1_{loc}((0,T)\times\R^2)$. To begin, we gather some implications solely based on the regularity class.
\begin{lem}\label{prelim-regularity}
    Let a vector field $v$ satisfy \eqref{leray-hopf-regularity} and be divergence-free. Then
    \begin{align*}
        &\quad v_2\in L^2_t H^1_x,\quad v_2\in L^4_t L^4_x\quad\text{and}\quad v_1v_2\in L^2_t L^2_x.
        \end{align*}
\end{lem}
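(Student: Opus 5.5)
The plan is to prove the three claims in order. Each later claim uses the earlier ones. The only inputs are the regularity \eqref{leray-hopf-regularity} and the divergence constraint.

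\emph{Step 1: $v_2\in L^2_tH^1_x$.} Since $v\in L^\infty_tL^2_x\subset L^2_tL^2_x$ on the finite interval $(0,T)$, we have $v_2\in L^2_tL^2_x$, and $\partial_{x_1}v_2\in L^2_tL^2_x$ by assumption. For the remaining derivative we use the weak divergence-free condition for almost every $t$. It gives $\partial_{x_2}v_2=-\partial_{x_1}v_1$ in $\mathcal D'(\R^2)$. The right-hand side lies in $L^2_x$ with $\norm{\partial_{x_1}v_1(t)}_{L^2_x}\in L^2_t$. Hence $\nabla v_2\in L^2_tL^2_x$, which gives $v_2\in L^2_tH^1_x$, together with the bound $\norm{v_2}_{L^2_tH^1_x}\le C\bigl(T^{1/2}\norm{v}_{L^\infty_tL^2_x}+\norm{\partial_{x_1}v}_{L^2_tL^2_x}\bigr)$. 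Measurability in time is inherited from the components, so it causes no difficulty.

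\emph{Step 2: $v_2\in L^4_tL^4_x$.} We apply the two-dimensional Ladyzhenskaya inequality $\norm{f}_{L^4(\R^2)}^2\le C\norm{f}_{L^2(\R^2)}\norm{\nabla f}_{L^2(\R^2)}$ for $f\in H^1(\R^2)$ to $f=v_2(t)$. This yields
\begin{equation*}
\int_0^T\norm{v_2}_{L^4_x}^4\,dt\le C\norm{v_2}_{L^\infty_tL^2_x}^2\int_0^T\norm{\nabla v_2}_{L^2_x}^2\,dt<\infty .
\end{equation*}

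\emph{Step 3: $v_1v_2\in L^2_tL^2_x$.} This is the only step where we lack full control of $v_1$, since $\partial_{x_2}v_1$ is unknown. We therefore cannot put $v_1$ in $L^4_x$ by Ladyzhenskaya. My plan is an anisotropic (mixed-norm) Hölder split that pairs the direction in which each factor is regular.

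Take $v_1(t)\in L^2_{x_2}L^\infty_{x_1}$. For a.e.\ $x_2$ the one-dimensional Agmon inequality gives $\norm{v_1(\cdot,x_2)}_{L^\infty_{x_1}}^2\le\norm{v_1(\cdot,x_2)}_{L^2_{x_1}}\norm{\partial_{x_1}v_1(\cdot,x_2)}_{L^2_{x_1}}$. Integrating in $x_2$ and applying Cauchy--Schwarz,
\begin{equation*}
\norm{v_1}_{L^2_{x_2}L^\infty_{x_1}}^2\le\norm{v_1}_{L^2_x}\norm{\partial_{x_1}v_1}_{L^2_x}.
\end{equation*}
Take $v_2(t)\in L^\infty_{x_2}L^2_{x_1}$. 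The same Agmon argument in $x_2$ for the function $x_2\mapsto\norm{v_2(\cdot,x_2)}_{L^2_{x_1}}^2$, whose derivative is bounded by $2\int\abs{v_2}\abs{\partial_{x_2}v_2}\,dx_1$, gives $\norm{v_2}_{L^\infty_{x_2}L^2_{x_1}}^2\le 2\norm{v_2}_{L^2_x}\norm{\partial_{x_2}v_2}_{L^2_x}$. Hölder's inequality, first in $x_1$ and then in $x_2$, then gives
\begin{equation*}
\norm{v_1v_2}_{L^2_x}^2\le\norm{v_1}_{L^2_{x_2}L^\infty_{x_1}}^2\norm{v_2}_{L^\infty_{x_2}L^2_{x_1}}^2\le 2\norm{v}_{L^2_x}^2\norm{\partial_{x_1}v_1}_{L^2_x}\norm{\partial_{x_2}v_2}_{L^2_x}.
\end{equation*}
By the divergence constraint, $\partial_{x_2}v_2=-\partial_{x_1}v_1$, so the right-hand side is bounded by $2\norm{v}_{L^2_x}^2\norm{\partial_{x_1}v_1}_{L^2_x}^2$. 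Integrating in time against $\norm{v}_{L^\infty_tL^2_x}^2$ gives $v_1v_2\in L^2_tL^2_x$.

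The only technical care needed is justifying the one-dimensional inequalities for a.e.\ slice. I would do this by density of $C_c^\infty$ in the relevant anisotropic spaces, or equivalently by using absolute continuity on a.e.\ line of Sobolev functions in one variable (Fubini). I expect this to be routine.
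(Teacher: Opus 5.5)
Your proposal is correct and follows the paper's own argument: the divergence constraint gives $v_2\in L^2_tH^1_x$, Ladyzhenskaya gives $L^4_tL^4_x$, and $v_1v_2$ is handled by an anisotropic H\"older split with the one-dimensional Gagliardo--Nirenberg/Agmon inequality taken in each component's regular direction, closed by $\partial_{x_2}v_2=-\partial_{x_1}v_1$. The only difference is that you perform the H\"older step in the opposite order of integration, using $v_1\in L^2_{x_2}L^\infty_{x_1}$ and $v_2\in L^\infty_{x_2}L^2_{x_1}$ instead of the paper's $L^\infty_{x_1}L^2_{x_2}$ and $L^2_{x_1}L^\infty_{x_2}$, and this changes nothing.
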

\begin{proof}
    From the divergence constraint, we have
    \begin{equation*}
        \partial_{x_2}v_2=-\partial_{x_1}v_1 \in L^2_t L^2_x,
    \end{equation*}
    and thus $v_2\in L^2_tH^1_x$. With Ladyzhenskaya's inequality, we immediately get $v_2\in L^4_tL^4_x$. Regarding the mixed integrability, we use the one-dimensional Gagliardo--Nirenberg inequality \ref{gagliardo} to obtain
    \begin{align*}
    \norm{v_1v_2}_{L^2_tL^2_x}\leq\norm{\norm{v_1}_{L^\infty_{x_1}L^2_{x_2}}\norm{v_2}_{L^2_{x_1}L^\infty_{x_2}}}_{L^2_t}&\leq 2\norm{\norm{v_1}^{1/2}_{L^2_x}\norm{\partial_{x_1}v_1}^{1/2}_{L^2_x}\norm{v_2}^{1/2}_{L^2_x}\norm{\partial_{x_2}v_2}^{1/2}_{L^2_x}}_{L^2_t}\\
    &\leq 2\norm{v_1}^{1/2}_{L^\infty_tL^2_x}\norm{v_2}^{1/2}_{L^\infty_tL^2_x}\norm{\partial_{x_1}v_1}^{1/2}_{L^2_tL^2_x}\norm{\partial_{x_2}v_2}^{1/2}_{L^2_tL^2_x}.
    \end{align*}
\end{proof}
\section{Square-integrable pressure}
The purpose of this section is to show that the pressure $p$ associated to a weak solution $u$ of \eqref{ans} is in fact in $L^2_tL^2_x$. As in the isotropic case, we can recover the pressure $p$ from the velocity $u$ by the Poisson equation
\begin{equation}\label{poisson}
    -\Delta p=\sum_{i,j=1}^2\partial_{x_i}\partial_{x_j}(u_i u_j),
\end{equation}
which arises by taking the divergence of the momentum equation in \eqref{ans}. Using the Riesz transform (see Lemma \ref{riesz}), we can (formally) give a solution $p$ to \eqref{poisson} by 
\begin{equation}\label{riesz-equation}
    p = \sum_{i,j=1}^2R_iR_j(u_iu_j)= R_1^2(u_1^2)+2R_1R_2(u_1u_2)+R_2^2(u_2^2).
\end{equation}
If the right hand-side of \eqref{riesz-equation} is in $L^2_t L^2_x$, this is the unique solution to \eqref{poisson} with $p\in L^2_tL^2_x$ (see, for more details, \cite[Section 5]{robinson}). With Lemma \ref{prelim-regularity} and the boundedness of the Riesz transform, we can control the last two terms. However, it is not clear whether, in general, $u_1^2$ is in $L^2_{x}$ or not. Thus, we need a refined directional Riesz estimate.
\begin{lem}\label{directional-riesz}
    Let $f,\partial_{x_1}f\in L^2_x$. Then there exists a constant $C>0$, such that 
    \begin{equation*}
        \norm{R_1(f^2)}_{L^2_x}\leq C\norm{f}_{L^2_x}\norm{\partial_{x_1}f}_{L^2_x}.
    \end{equation*}
\end{lem}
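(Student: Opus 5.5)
The plan is to write $R_1 = \partial_{x_1}(-\Delta)^{-1/2}$, i.e.\ on the Fourier side $\widehat{R_1 g}(\xi) = -i\,\frac{\xi_1}{|\xi|}\hat g(\xi)$. This lets us trade the Riesz transform for a derivative in $x_1$ at the cost of a smoothing operator of order $-1$. Formally,
\begin{equation*}
R_1(f^2) = (-\Delta)^{-1/2}\partial_{x_1}(f^2) = 2(-\Delta)^{-1/2}\bigl(f\,\partial_{x_1}f\bigr).
\end{equation*}
In two dimensions, $(-\Delta)^{-1/2}$ is the Riesz potential $I_1$, with kernel $c\,|x|^{-1}$. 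The Hardy--Littlewood--Sobolev inequality gives $I_1\colon L^{q}(\R^2)\to L^{r}(\R^2)$ for $\frac1r=\frac1q-\frac12$, $1<q<2$.

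The problem is that $f\,\partial_{x_1}f$ lies only in $L^1_x$, the endpoint $q=1$, where HLS into $L^2$ fails. So the key step is to avoid the endpoint by exploiting anisotropy. Instead of the isotropic Riesz potential, I would work directly with the multiplier $\frac{\xi_1}{|\xi|}$ and use Plancherel:
\begin{equation*}
\norm{R_1(f^2)}_{L^2_x}^2=\int_{\R^2}\frac{\xi_1^2}{|\xi|^2}\,|\widehat{f^2}(\xi)|^2\,d\xi .
\end{equation*}

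Next, take the Fourier transform only in $x_2$, writing $\eta=\xi_2$, and let $g(x_1,\eta)=\mathcal F_{x_2}(f^2)(x_1,\eta)$. Since $\frac{\xi_1^2}{\xi_1^2+\eta^2}$ is the symbol of $-\partial_{x_1}^2(-\partial_{x_1}^2+\eta^2)^{-1}$ in $x_1$, for each fixed $\eta\neq0$ we get the one-dimensional identity
\begin{equation*}
\int_{\R}\frac{\xi_1^2}{\xi_1^2+\eta^2}|\hat g(\xi_1,\eta)|^2\,d\xi_1=\bigl\langle \partial_{x_1}g,\,(\eta^2-\partial_{x_1}^2)^{-1}\partial_{x_1}g\bigr\rangle .
\end{equation*}
The operator $(\eta^2-\partial_{x_1}^2)^{-1}$ has kernel $\frac{1}{2|\eta|}e^{-|\eta||x_1-y_1|}$, which is bounded by $\frac{1}{2|\eta|}$. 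Hence the right-hand side is at most $\frac{1}{2|\eta|}\norm{\partial_{x_1}g(\cdot,\eta)}_{L^1_{x_1}}^2$. Here $\partial_{x_1}g=\mathcal F_{x_2}(2f\partial_{x_1}f)$, and so $|\partial_{x_1} g(x_1,\eta)|\le 2\norm{f(x_1,\cdot)\partial_{x_1}f(x_1,\cdot)}_{L^1_{x_2}}$ uniformly in $\eta$. This gives $\norm{\partial_{x_1}g(\cdot,\eta)}_{L^1_{x_1}}\le 2\norm{f}_{L^2_x}\norm{\partial_{x_1}f}_{L^2_x}$.

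The trouble is that integrating $\frac1{|\eta|}$ over $\eta$ diverges. This is the main obstacle, and I would split into low and high frequencies in $\eta$.
\begin{itemize}[label=$\cdot$]
\item For $|\eta|>\lambda$, bound the multiplier by $1$. The contribution is then $\int_{|\eta|>\lambda}\norm{g(\cdot,\eta)}_{L^2_{x_1}}^2$, which requires an $L^2$-type control of $f^2$; that is not available.
\item So I would instead use the alternative kernel bound $\int\!\!\int \frac{1}{2|\eta|}e^{-|\eta||x_1-y_1|}|h(x_1)||h(y_1)|\,dx_1\,dy_1 \le \frac{1}{|\eta|^2}\norm{h}_{L^2_{x_1}}^2$ (Young's inequality). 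Combined with the $L^1$ bound this gives decay, but the $L^2_{x_1}$ norm of $\partial_{x_1}g$ again involves $f^2$ in a way we cannot control.
\end{itemize}

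Rather than fight this, the more promising route is to switch to Fourier in $x_1$ alone and use that $f^2\ge0$. In this setup $R_1$ sees only $x_1$-oscillations, and $\frac{|\xi_1|}{|\xi|}\le \min\bigl(1,\frac{|\xi_1|}{|\xi_2|}\bigr)$. I would then seek a bound $\norm{R_1(f^2)}_{L^2}^2\lesssim \int\!\!\int \frac{|\xi_1|^2}{|\xi|^2}\,|\widehat{f^2}|^2$ controlled through the 1D Gagliardo--Nirenberg inequality $\norm{f(\cdot,x_2)}_{L^\infty_{x_1}}^2\le \norm{f(\cdot,x_2)}_{L^2_{x_1}}\norm{\partial_{x_1}f(\cdot,x_2)}_{L^2_{x_1}}$, which the paper already uses in Lemma~\ref{prelim-regularity}.

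Concretely, I expect the cleanest proof to go by duality. Pair with $\varphi\in L^2_x$ and write $\langle R_1(f^2),\varphi\rangle=-\langle f^2,R_1\varphi\rangle$. Then decompose $R_1\varphi=\partial_{x_1}\Psi$ with $\Psi=(-\Delta)^{-1/2}\varphi\in\dot H^1$, and integrate by parts to get $2\langle f\partial_{x_1}f,\Psi\rangle$. We have $\Psi\in \dot H^{1}(\R^2)$, so $\Psi\in \mathrm{BMO}$ but not $L^\infty$. The last step is therefore to bound $\bigl|\int f\,\partial_{x_1}f\,\Psi\bigr|\le \norm{\partial_{x_1}f}_{L^2}\norm{f\Psi}_{L^2}$, and to show $\norm{f\Psi}_{L^2}\lesssim\norm{f}_{L^2}\norm{\nabla\Psi}_{L^2}$ via a mixed-norm estimate. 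Taking $\Psi\in L^\infty_{x_2}L^2_{x_1}$-type norms handles one direction and $f\in L^\infty_{x_1}L^2_{x_2}$ (from Gagliardo--Nirenberg) the other. This mixed-norm step, and in particular making the $\Psi$-factor scale correctly, is where I expect the real difficulty, and I would first try to establish $\norm{\Psi}_{L^2_{x_1}L^\infty_{x_2}}\lesssim\norm{\varphi}_{L^2}$ directly.
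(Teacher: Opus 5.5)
\textbf{There is a genuine gap.} None of the routes you sketch is carried through, and the duality route you commit to fails at exactly the step you flag.

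\emph{Why the duality step fails.} After writing $\langle R_1(f^2),\varphi\rangle=\pm2\langle f\,\partial_{x_1}f,\Psi\rangle$ with $\Psi=(-\Delta)^{-1/2}\varphi$, you use the bound $\abs{\int f\,\partial_{x_1}f\,\Psi\,dx}\le\norm{\partial_{x_1}f}_{L^2_x}\norm{f\Psi}_{L^2_x}$. This discards the only cancellation that makes the pairing meaningful, namely $\int_\R f\,\partial_{x_1}f\,dx_1=0$ on every horizontal line.

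In two dimensions, $\norm{\nabla\Psi}_{L^2_x}=\norm{\varphi}_{L^2_x}$ does not control $\Psi$ in any Lebesgue norm. Logarithmic cutoffs give $\Psi\ge c$ on the unit ball with $\norm{\nabla\Psi}_{L^2_x}\le1$ for arbitrarily large $c$. Hence $\norm{f\Psi}_{L^2_x}\lesssim\norm{f}_{L^2_x}\norm{\nabla\Psi}_{L^2_x}$ is false.

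Your fallback $\norm{\Psi}_{L^2_{x_1}L^\infty_{x_2}}\lesssim\norm{\varphi}_{L^2_x}$ is false by scaling. Replacing $\varphi$ by $\varphi(\lambda\,\cdot)$ multiplies the left side by $\lambda^{-3/2}$ and the right side by $\lambda^{-1}$. Even if it held, combining it with $\norm{f}_{L^\infty_{x_1}L^2_{x_2}}\lesssim\norm{f}^{1/2}_{L^2_x}\norm{\partial_{x_1}f}^{1/2}_{L^2_x}$ would give $\norm{f}^{1/2}_{L^2_x}\norm{\partial_{x_1}f}^{3/2}_{L^2_x}$, which has the wrong homogeneity.

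Your partial-Fourier attempt in $x_2$ fails for a related reason, as you note yourself. You integrate the multiplier in $\xi_1$ at fixed $\eta$, which produces the non-integrable weight $1/\abs{\eta}$.

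\emph{The missing idea.} Bound $\widehat{f^2}$ uniformly in the vertical frequency and integrate the multiplier in $\xi_2$ first. Your remark about working in $x_1$ alone points this way, but the pointwise bound $\frac{\abs{\xi_1}}{\abs{\xi}}\le\min(1,\abs{\xi_1}/\abs{\xi_2})$ is not what is needed. The paper proceeds as follows.
\begin{enumerate}
\item Cauchy--Schwarz in the $\eta_2$-variable of the convolution gives $\abs{\widehat{f^2}(\xi_1,\xi_2)}\le\frac{1}{2\pi}(a*a)(\xi_1)$, where $a(\xi_1)=\norm{\hat f(\xi_1,\cdot)}_{L^2_{\xi_2}}$.
\item The identity $\int_\R\frac{\xi_1^2}{\xi_1^2+\xi_2^2}\,d\xi_2=\pi\abs{\xi_1}$ then reduces $\norm{R_1(f^2)}_{L^2_x}^2$ to the one-dimensional quantity $\int_\R\abs{(a*a)(\xi_1)}^2\abs{\xi_1}\,d\xi_1$. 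This is a multiple of $\norm{A^2}^2_{\dot H^{1/2}(\R)}$ with $A=\mathcal F_1^{-1}a$.
\item Interpolation gives $\norm{A^2}^2_{\dot H^{1/2}}\le\norm{A^2}_{L^2}\norm{(A^2)'}_{L^2}$. One-dimensional Gagliardo--Nirenberg bounds this by $C\norm{A}^2_{L^2}\norm{A'}^2_{L^2}=C\norm{f}^2_{L^2_x}\norm{\partial_{x_1}f}^2_{L^2_x}$.
\end{enumerate}

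\emph{How to keep duality.} Replace your Cauchy--Schwarz step by a line-by-line $\dot H^{1/2}_{x_1}$--$\dot H^{-1/2}_{x_1}$ pairing. The same Cauchy--Schwarz in $\xi_2$ gives $\sup_{x_2}\norm{R_1\varphi(\cdot,x_2)}_{\dot H^{-1/2}(\R)}\lesssim\norm{\varphi}_{L^2_x}$. On each line, $\norm{f(\cdot,x_2)^2}_{\dot H^{1/2}(\R)}\lesssim\norm{f(\cdot,x_2)}_{L^2}\norm{\partial_{x_1}f(\cdot,x_2)}_{L^2}$. Cauchy--Schwarz in $x_2$ then concludes. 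This is the dual of the paper's computation.
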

\begin{proof}
    For simplicity, we first assume $f\in\mathcal{S}(\R^2)$. To begin, we want to consider the Fourier transform of $f^2$, which we will denote by $\widehat{f^2}$ (see \eqref{fourier}). With the Fourier product-convolution formula and Cauchy--Schwarz, a first key observation is that it has an upper bound independent of the vertical frequency 
    \begin{align*}
        \abs{\widehat{f^2}(\xi_1,\xi_2)}&=\frac{1}{2\pi}\abs{\int_\R\int_\R\widehat{f}(\eta_1,\eta_2)\widehat{f}(\xi_1-\eta_1,\xi_2-\eta_2)\;d\eta_2\;d\eta_1}\\
        &\leq\frac{1}{2\pi} \int_\R\left(\int_\R\abs{\widehat{f}(\eta_1,\eta_2)}^2\;d\eta_2\right)^{1/2}\left(\int_\R\abs{\widehat{f}(\xi_1-\eta_1,\xi_2-\eta_2)}^2\;d\eta_2\right)^{1/2} \;d\eta_1\\
        &=\frac{1}{2\pi}\int_\R\left(\int_\R\abs{\widehat{f}(\eta_1,\eta_2)}^2\;d\eta_2\right)^{1/2}\left(\int_\R\abs{\widehat{f}(\xi_1-\eta_1,\zeta_2)}^2\;d\zeta_2\right)^{1/2} \;d\eta_1
        \leq \frac{1}{2\pi}(a*a)(\xi_1), 
    \end{align*}
    with $a(\xi_1):=\left(\int_\R\abs{\widehat{f}(\xi_1,\xi_2)^2}\;d\xi_2\right)^{1/2}$. By Plancherel, we have the sizes
    \begin{equation*}
        \norm{a}^2_{L^2_{\xi_1}}=\norm{\widehat{f}}^2_{L^2_{\xi}}=\norm{f}_{L^2_x}^2,\quad\norm{\xi_1a}^2_{L^2_{\xi_1}}=\int_\R\xi_1^2\int_\R\abs{\widehat{f}(\xi_1,\xi_2)}^2\;d\xi_2\;d\xi_1=\norm{\xi_1\widehat{f}}^2_{L^2_{\xi}}=\norm{\partial_{x_1}f}_{L^2_x}^2.
    \end{equation*}
    Using the inequality above, we get for the Riesz transform
    \begin{align*}
        \norm{R_1(f^2)}_{L^2_x}^2&=\int_\R\int_\R\frac{\xi_1^2}{\xi_1^2+\xi_2^2}\abs{\widehat{f^2}(\xi_1,\xi_2)}^2\;d\xi_1\;d\xi_2 \leq \frac{1}{4\pi^2}\int_\R\int_\R\frac{\xi_1^2}{\xi_1^2+\xi_2^2}\abs{(a*a)(\xi_1)}^2\;d\xi_1\;d\xi_2\\
        &=\frac{1}{4\pi^2}\int_\R\abs{(a*a)(\xi_1)}^2\int_\R\frac{\xi_1^2}{\xi_1^2+\xi_2^2}\;d\xi_2\;d\xi_1=\frac{1}{4\pi^2}\int_\R\abs{(a*a)(\xi_1)}^2\abs{\xi_1}\int_\R\frac{1}{1+s^2}\;ds\;d\xi_1\\
        &=\frac{1}{4\pi}\int_\R\abs{(a*a)(\xi_1)}^2\abs{\xi_1}\;d\xi_1.
    \end{align*}
    Next, we want to go back to the physical variables. Therefore, we introduce
    \begin{equation*}
        A := \mathcal{F}^{-1}_{1}a,
    \end{equation*}
    where $\mathcal{F}^{-1}_{1}$ is the unitary one-dimensional inverse Fourier transform with respect to the first variable. Using a similar argument as before and the Gagliardo--Nirenberg inequality \ref{gagliardo}, we conclude
    \begin{align*}
        \int_\R\abs{(a*a)(\xi_1)}^2\abs{\xi_1}\;d\xi_1&=2\pi\int_\R\abs{\widehat{A^2}(\xi_1)}^2\abs{\xi_1}\;d\xi_1= 2\pi\int_\R\abs{\widehat{A^2}(\xi_1)}\left(\abs{\widehat{A^2}(\xi_1)}\abs{\xi_1}\right)\;d\xi_1\\
        &\leq 2\pi\left(\int_\R\abs{\widehat{A^2}(\xi_1)}^2\;d\xi_1\right)^{1/2}\left(\int_\R\abs{\widehat{A^2}(\xi_1)}^2\xi_1^2\;d\xi_1\right)^{1/2}\\
        &=2\pi\norm{\widehat{A^2}}_{L^2_{\xi_1}}\norm{\xi_1\widehat{A^2}}_{L^2_{\xi_1}}=2\pi\norm{A^2}_{L^2_{x_1}}\norm{(A^2)'}_{L^2_{x_1}}=4\pi\norm{A}^2_{L^4_{x_1}}\norm{A'A}_{L^2_{x_1}}\\
        &\leq 4\pi\norm{A}^2_{L^4_{x_1}}\norm{A}_{L^\infty_{x_1}}\norm{A'}_{L^2_{x_1}}\\&\leq C\left(\norm{A}^{3/2}_{L^2_{x_1}}\norm{A'}^{1/2}_{L^2_{x_1}}\right)\left(\norm{A}^{1/2}_{L^2_{x_1}}\norm{A'}^{1/2}_{L^2_{x_1}}\right)\norm{A'}_{L^2_{x_1}}\\
        &=C\norm{A}^{2}_{L^2_{x_1}}\norm{A'}^{2}_{L^2_{x_1}}=C\norm{\widehat{A}}^{2}_{L^2_{\xi_1}}\norm{\xi_1\widehat{A}}^{2}_{L^2_{\xi_1}}=C\norm{a}^{2}_{L^2_{\xi_1}}\norm{\xi_1a}^{2}_{L^2_{x_1}}\\ &=C\norm{f}_{L^2_x}^2\norm{\partial_{x_1}f}_{L^2_x}^2.
    \end{align*}
    To show the general case, remove the Schwartz assumption and choose, by density, a sequence $f_n\in \mathcal{S}(\R^2)$ such that $f_n\to f$ and $\partial_{x_1}f_n\to\partial_{x_1}f$ in $L^2_x$. We get immediately 
    \begin{equation*}
        \norm{f_n^2-f^2}_{L^1_x}\leq\norm{f_n-f}_{L^2_x}\left(\norm{f_n}_{L^2_x}+\norm{f}_{L^2_x}\right)\to 0,
    \end{equation*}
    and therefore $\widehat{f_n^2}\to\widehat{f^2}$ uniformly. With Fatou's lemma, this finishes the general case
    \begin{align*}
        \norm{R_1(f^2)}_{L^2_x}^2&=\int_{\R^2}\abs{i\frac{\xi_1}{\abs{\xi}}\widehat{f^2}(\xi)}^2\;d\xi\leq \liminf_{n\to\infty}\int_{\R^2}\abs{i\frac{\xi_1}{\abs{\xi}}\widehat{f^2_n}(\xi)}^2\;d\xi=\liminf_{n\to\infty}\norm{R_1(f^2_n)}_{L^2_x}^2\\
        &\leq C^2\lim_{n\to\infty}\norm{f_n}_{L^2_x}^2\norm{\partial_{x_1}f_n}_{L^2_x}^2=C^2\norm{f}_{L^2_x}^2\norm{\partial_{x_1}f}_{L^2_x}^2.
    \end{align*}
\end{proof}
Now, we control  every term in \eqref{riesz-equation}, so we state the main result of this section.
\begin{prop}\label{prop-pressure-time-regularity}
    Let $u$ be a weak solution to \eqref{ans}. Then there exists a unique pressure $p\in L^2_tL^2_x$ such that \eqref{ans} holds in the sense of distributions. Moreover, we have the additional regularity \begin{equation}\label{time-regularity}
        \partial_tu\in L^2_tH^{-1}_x
    \end{equation}
\end{prop}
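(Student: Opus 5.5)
We define the pressure by the Riesz-transform formula \eqref{riesz-equation} and first check that it lies in $L^2_tL^2_x$. For almost every $t$, Lemma \ref{directional-riesz} applied to $f=u_1(t)$ gives
\begin{equation*}
\norm{R_1^2(u_1^2)}_{L^2_x}\le \norm{R_1(u_1^2)}_{L^2_x}\le C\norm{u_1}_{L^2_x}\norm{\partial_{x_1}u_1}_{L^2_x},
\end{equation*}
using that $R_1$ is an $L^2$-contraction. Squaring and integrating in time bounds this by $C\norm{u_1}_{L^\infty_tL^2_x}^2\norm{\partial_{x_1}u_1}_{L^2_tL^2_x}^2$. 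The products $u_1u_2$ and $u_2^2$ lie in $L^2_tL^2_x$ by Lemma \ref{prelim-regularity}, and $R_iR_j$ are bounded on $L^2$, so the remaining two terms are controlled. Hence $p\in L^2_tL^2_x$. Measurability in time is standard, since $p$ depends continuously on the quadratic terms.

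Next we show that $(u,p)$ solves \eqref{ans} in distributions for arbitrary, not necessarily divergence-free, test functions. Let $F:=\partial_tu+\diverg(u\otimes u)-\partial_{x_1}^2u$, a distribution on $(0,T)\times\R^2$. By \eqref{distributional-solution}, $F$ annihilates all divergence-free test fields. We want to identify $F=-\nabla p$. The cleanest route is to apply the Leray projection: for $\phi\in C^\infty_c$, set $\phi=\mathbb P\phi+\nabla\Delta^{-1}\diverg\phi$.

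The obstacle is that $\mathbb P\phi$ is not compactly supported, so we extend \eqref{distributional-solution} to divergence-free fields in $C^\infty_c((0,T);H^k)$ for large $k$. This works by density: every term in \eqref{distributional-solution} is continuous in $\phi$ under $H^1_x$-type norms, because $u\otimes u$ is handled as follows. The parts $u_1u_2$ and $u_2^2$ are in $L^2_tL^2_x$, while $u_1^2\in L^\infty_tL^1_x$ is paired with $\partial_{x_1}\phi_1$, which is bounded in $L^\infty_x$ for $\phi\in H^k$, $k\ge 3$. Divergence-free $C_c^\infty$ fields are dense in the divergence-free $H^k$ fields, for instance by cutting off a stream function and mollifying, so the extension holds.

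Then, for a general test function $\phi$, the $\mathbb P\phi$ part contributes zero. For the gradient part $\nabla\Delta^{-1}\diverg\phi$, the time-derivative and viscous terms vanish after integration by parts using $\diverg u=0$; this again requires a density argument. The convective term gives $\int \nabla\nabla\Delta^{-1}\diverg\phi:(u\otimes u)$, which by \eqref{riesz-equation} equals $\int p\,\diverg\phi$. This identifies the pressure, and I expect this step to be the main technical point.

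Uniqueness of $p$ in $L^2_tL^2_x$ follows because the difference $q$ of two pressures satisfies $\nabla q=0$. So $q(t)$ is constant in $x$, and being in $L^2(\R^2)$ it vanishes.

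Finally, for \eqref{time-regularity}, we write
\begin{equation*}
\partial_tu=-\diverg(u\otimes u)-\nabla p+\partial_{x_1}^2u.
\end{equation*}
The terms $\nabla p$ and $\partial_{x_1}(\partial_{x_1}u)$ lie in $L^2_tH^{-1}_x$. The divergence of $u_1u_2$ and $u_2^2$ also lies in $L^2_tH^{-1}_x$. The delicate term is $\partial_{x_1}(u_1^2)$, since $u_1^2$ need not be in $L^2_x$. We estimate it on the Fourier side: the $H^{-1}$ norm of $\partial_{x_1}(u_1^2)$ is $\norm{\tfrac{\xi_1}{\langle\xi\rangle}\widehat{u_1^2}}_{L^2}\le\norm{\tfrac{\xi_1}{|\xi|}\widehat{u_1^2}}_{L^2}=\norm{R_1(u_1^2)}_{L^2_x}$. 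This is controlled by Lemma \ref{directional-riesz} and is square-integrable in time.

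Alternatively, we can use the momentum equation for the first component directly: the contribution $\partial_{x_1}(u_1^2)$ together with $\partial_{x_1}p$ is the only issue, and both are handled by the bound above. This gives $\partial_tu\in L^2_tH^{-1}_x$.
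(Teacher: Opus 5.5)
Your proposal is correct and follows the paper's route: define $p$ by \eqref{riesz-equation}, control $R_1^2(u_1^2)$ via Lemma \ref{directional-riesz} and the other two terms via Lemma \ref{prelim-regularity}, and bound $\norm{\partial_{x_1}(u_1^2)}_{H^{-1}_x}\le\norm{R_1(u_1^2)}_{L^2_x}$ on the Fourier side. Your Leray-projection argument only spells out the identification of $p$ that the paper delegates to the literature, with one sign slip: since $\partial_{x_i}\partial_{x_j}\Delta^{-1}=-R_iR_j$ in the paper's convention, the convective term on the gradient part equals $-\int p\,\diverg\phi$, which is exactly what cancels the pressure term $\int p\,\diverg\phi$ in the weak form.
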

\begin{proof}
    The pressure part follows from \eqref{riesz-equation} together with Lemmas \ref{prelim-regularity} and \ref{directional-riesz}. The time regularity is also a relatively straightforward consequence:
    \begin{align*}
        \norm{\partial_tu}_{L^2_tH^{-1}_x}\leq& \norm{\diverg(u\otimes u)}_{L^2_tH^{-1}_x}+\norm{\nabla p}_{L^2_tH^{-1}_x} + \norm{\partial_{x_1}^2u}_{L^2_tH^{-1}_x}\\
        \leq&\norm{\partial_{x_1}(u_1^2)}_{L^2_tH^{-1}_x}+\norm{\partial_{x_1}(u_1u_2)}_{L^2_tH^{-1}_x}+\norm{\partial_{x_2}(u_1u_2)}_{L^2_tH^{-1}_x}+\norm{\partial_{x_2}(u_2^2)}_{L^2_tH^{-1}_x}\\&+\norm{\nabla p}_{L^2_tH^{-1}_x} + \norm{\partial_{x_1}^2u}_{L^2_tH^{-1}_x}\\
        \leq&\norm{\partial_{x_1}(u_1^2)}_{L^2_tH^{-1}_x}+C\left(\norm{u}_{L^\infty_tL^2_x},\norm{\partial_{x_1}u}_{L^2_tL^2_x}\right),
    \end{align*}
    for the first term, we use the Fourier transform and Lemma \ref{directional-riesz}
    \begin{align*}
        \norm{\partial_{x_1}(u_1^2)}_{H^{-1}_x}^2=\int_{\R^2}\frac{\xi_1^2}{1+\abs{\xi}^2}\abs{\widehat{u_1^2}(\xi)}^2\;d\xi\leq\int_{\R^2}\frac{\xi_1^2}{\abs{\xi}^2}\abs{\widehat{u_1^2}(\xi)}^2\;d\xi=\int_{\R^2}\abs{\widehat{R_1(u_1^2)}(\xi)}^2\;d\xi=\norm{R_1(u_1^2)}_{L^2_x}^2.
    \end{align*}
\end{proof}
For completeness, we recall the following weak continuity property.
\begin{lem}\label{weak-time-continuity}
    A weak solution $u$ of \eqref{ans} has a unique representative in $C_w([0,T];L^2_x)$.
\end{lem}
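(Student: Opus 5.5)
The plan is to use the time regularity \eqref{time-regularity} from Proposition \ref{prop-pressure-time-regularity}, together with the uniform $L^2$ bound, and then a density argument in the test functions.

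\emph{Step 1: continuity in a weak space.} By Proposition \ref{prop-pressure-time-regularity}, $\partial_t u\in L^2_tH^{-1}_x$, and $u\in L^\infty_tL^2_x\subset L^2_tH^{-1}_x$. Hence $u\in W^{1,2}(0,T;H^{-1}_x)$. By the standard one-dimensional Sobolev embedding for vector-valued functions, $u$ has a representative $\tilde u\in C([0,T];H^{-1}_x)$, namely
\begin{equation*}
\tilde u(t)=u(t_0)+\int_{t_0}^t\partial_tu\,d\tau .
\end{equation*}
Here one identifies the distributional time derivative, computed against test functions $\phi(t)\psi(x)$, with the $L^2_tH^{-1}_x$ element.

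\emph{Step 2: the representative takes values in $L^2_x$.} Let $E\subset[0,T]$ be the full-measure set on which $u(t)=\tilde u(t)$ and $\norm{u(t)}_{L^2_x}\le M:=\norm{u}_{L^\infty_tL^2_x}$. Fix $t\in[0,T]$ and choose $t_n\in E$ with $t_n\to t$. The sequence $\tilde u(t_n)$ is bounded in $L^2_x$, so a subsequence converges weakly in $L^2_x$ to some $v$. It also converges in $H^{-1}_x$ to $\tilde u(t)$, and limits agree in distributions. Hence $\tilde u(t)=v\in L^2_x$ with $\norm{\tilde u(t)}_{L^2_x}\le M$ for every $t$.

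\emph{Step 3: weak continuity in $L^2_x$.} Let $g\in L^2_x$ and $\eps>0$. Choose $h\in H^1_x$ with $\norm{g-h}_{L^2_x}<\eps$. Then
\begin{equation*}
|\langle \tilde u(t)-\tilde u(s),g\rangle|\le 2M\eps+\norm{\tilde u(t)-\tilde u(s)}_{H^{-1}_x}\norm{h}_{H^1_x}.
\end{equation*}
The second term tends to zero as $s\to t$ by Step 1. Hence $\tilde u\in C_w([0,T];L^2_x)$.

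\emph{Step 4: uniqueness.} Two weakly continuous representatives agree almost everywhere. Pairing with any fixed $g$ therefore gives two continuous real functions that agree a.e., hence everywhere. So the two representatives agree for every $t$.

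The main point requiring care is Step 1, namely identifying the weak time derivative within Definition \ref{def-weak-sol}. That definition only uses divergence-free test functions. This is exactly why the pressure from Proposition \ref{prop-pressure-time-regularity} is needed: it upgrades the equation to hold against all test functions, making $\partial_tu$ a genuine $L^2_tH^{-1}_x$ distribution.

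If one prefers to avoid this, an alternative is to work only with divergence-free $\psi$, that is, in the dual of solenoidal $H^1$. In that setting the same density argument in Step 3 still works, because divergence-free $H^1$ fields are dense in the divergence-free part of $L^2_x$ and $u(t)$ is divergence-free.
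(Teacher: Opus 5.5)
Your argument is correct, but it takes a different route from the paper. For existence, the paper cites the balance-law argument of \cite[Lemma 8]{delellis1}. For smooth divergence-free test fields $\psi$, Definition \ref{def-weak-sol} alone shows that $t\mapsto\int_{\R^2}u(t)\cdot\psi\,dx$ has distributional derivative $\int_{\R^2}(u\otimes u):\nabla\psi-\partial_{x_1}u\cdot\partial_{x_1}\psi\,dx\in L^1(0,T)$, so it is a.e.\ equal to a continuous function. A countable dense family of such $\psi$, the uniform $L^2_x$ bound and the divergence constraint then produce the weakly continuous representative; uniqueness is exactly your Step 4. That route needs only the weak formulation and local integrability of the flux, so it is independent of Proposition \ref{prop-pressure-time-regularity} and works verbatim for the Euler equations.

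Your route instead rests on $\partial_t u\in L^2_tH^{-1}_x$, and hence on the square-integrable pressure and Lemma \ref{directional-riesz}. In exchange it avoids the external citation and gives $u\in C([0,T];H^{-1}_x)$ along the way. This is genuine extra information at this stage, since weak $L^2_x$ continuity does not imply strong $H^{-1}_x$ continuity on $\R^2$.

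Concerning your closing variant with solenoidal $H^1$ test fields: it avoids the pressure, but not the directional estimate. Since $u_1^2$ is only in $L^1_x$, you still need Lemma \ref{directional-riesz}, or an anisotropic Gagliardo--Nirenberg bound, to see that $\diverg(u\otimes u)$ acts boundedly on such fields with an $L^1_t$ constant. With smoother solenoidal test fields, say $\nabla\psi\in L^\infty_x$, this issue disappears, and that is essentially the paper's argument.
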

\begin{proof}
    The existence follows from the standard balance-law argument of \cite[Lemma 8]{delellis1}, which applies verbatim to \eqref{ans}. For uniqueness, assume $\tilde{u}$ is another weakly continuous $L^2_x$-representative. Then, for any $\phi\in L^2_x$, the continuous map
    \begin{equation*}
        t\to \int_{\R^2}(\bar{u}(t)-\tilde{u}(t))\cdot\phi\;dx,
    \end{equation*}
    vanishes almost everywhere, hence everywhere.
\end{proof}
Now, we are in a position to show \eqref{energy-equality} in two steps. First, we show  the energy equality for the vertical component
\begin{equation*}
    \norm{u_2(t)}_{L^2_x}^2+2\int_s^t\norm{\partial_{x_1}u_2(\tau)}_{L^2_x}^2\;d\tau=\norm{u_2(s)}_{L^2_x}^2-2\int_s^t\int_{\R^2}p\partial_{x_1}u_1\;dx\;d\tau.
\end{equation*}
Then, with some additional work, we show the counterpart for the horizontal component
\begin{equation*}
    \norm{u_1(t)}_{L^2_x}^2+2\int_s^t\norm{\partial_{x_1}u_1(\tau)}_{L^2_x}^2\;d\tau=\norm{u_1(s)}_{L^2_x}^2+2\int_s^t\int_{\R^2}p\partial_{x_1}u_1\;dx\;d\tau.
\end{equation*}
Adding the two identities yields \eqref{energy-equality}.
\section{The vertical component}
We have previously seen that the full gradient of $u_2$ is in $L^2_x$, due to the divergence constraint. Essentially, this already implies the energy equality.
\begin{prop}\label{vertical-energy}
    Let $u$ be a weak solution to \eqref{ans}. Then the unique representative of $u_2$ is in $C([0,T];L^2_x)$ and, for every $0\leq s< t\leq T$, satisfies
    \begin{equation*}
    \norm{u_2(t)}_{L^2_x}^2+2\int_s^t\norm{\partial_{x_1}u_2(\tau)}_{L^2_x}^2\;d\tau=\norm{u_2(s)}_{L^2_x}^2-2\int_s^t\int_{\R^2}p\partial_{x_1}u_1\;dx\;d\tau.
\end{equation*}
\end{prop}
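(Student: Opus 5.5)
We treat the second component of the momentum equation as a linear evolution equation for $u_2$ alone,
\begin{equation*}
    \partial_t u_2 = \partial_{x_1}^2 u_2 - \partial_{x_1}(u_1u_2) - \partial_{x_2}(u_2^2) - \partial_{x_2} p ,
\end{equation*}
which holds in $\mathcal{D}'((0,T)\times\R^2)$ once the pressure of Proposition \ref{prop-pressure-time-regularity} is available. (Distributional validity for all test functions, not only divergence-free ones, is exactly what the proposition provides.)

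We will check that every term on the right lies in $L^2_tH^{-1}_x$:
\begin{itemize}
    \item $\partial_{x_1}^2u_2\in L^2_tH^{-1}_x$ because $\partial_{x_1}u_2\in L^2_tL^2_x$.
    \item $\partial_{x_1}(u_1u_2)\in L^2_tH^{-1}_x$ because $u_1u_2\in L^2_tL^2_x$ by Lemma \ref{prelim-regularity}.
    \item $\partial_{x_2}(u_2^2)\in L^2_tH^{-1}_x$ because $u_2\in L^4_tL^4_x$.
    \item $\partial_{x_2}p\in L^2_tH^{-1}_x$ because $p\in L^2_tL^2_x$.
\end{itemize}
Hence $\partial_tu_2\in L^2_tH^{-1}_x$, which is also consistent with \eqref{time-regularity}. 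By Lemma \ref{prelim-regularity}, $u_2\in L^2_tH^1_x$.

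The Lions--Magenes lemma for the Gelfand triple $H^1_x\subset L^2_x\subset H^{-1}_x$ then gives:
\begin{itemize}
    \item $u_2$ has a representative in $C([0,T];L^2_x)$;
    \item $t\mapsto\norm{u_2(t)}_{L^2_x}^2$ is absolutely continuous with $\frac{d}{dt}\norm{u_2}_{L^2_x}^2=2\langle\partial_tu_2,u_2\rangle_{H^{-1},H^1}$.
\end{itemize}
By Lemma \ref{weak-time-continuity} this continuous representative coincides with the second component of the weakly continuous one, so the representative in the statement is unambiguous.

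Next we compute the duality pairing term by term, using $u_2\in H^1_x$ for a.e. $t$:
\begin{align*}
    \langle\partial_{x_1}^2u_2,u_2\rangle &= -\norm{\partial_{x_1}u_2}_{L^2_x}^2,\\
    -\langle\partial_{x_2}p,u_2\rangle &= \int p\,\partial_{x_2}u_2\,dx = -\int p\,\partial_{x_1}u_1\,dx ,
\end{align*}
where the last step uses the divergence constraint. This reproduces the pressure term in the statement.

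The main obstacle is to show that the transport terms cancel:
\begin{equation*}
    \int u_1u_2\,\partial_{x_1}u_2 + u_2^2\,\partial_{x_2}u_2\,dx=0 .
\end{equation*}
Formally this is $\int u\cdot\nabla(u_2^2/2)\,dx=0$ by incompressibility. The difficulty is that $u_1$ only lies in $L^2_x$ with $\partial_{x_1}u_1\in L^2_x$, so we argue by approximation in $u_2$. Take $\varphi_n\in C_c^\infty(\R^2)$ with $\varphi_n\to u_2$ in $H^1_x$, chosen additionally bounded in $L^\infty_x$ if needed, or simply use $H^1\cap L^4$ convergence. For smooth $\varphi$, divergence-freeness of $u$ gives $\int u\cdot\nabla(\varphi^2)\,dx=0$. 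It then remains to pass to the limit in
\begin{equation*}
    \int u_1\,\varphi\,\partial_{x_1}\varphi + u_2\,\varphi\,\partial_{x_2}\varphi\,dx .
\end{equation*}

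The second term is harmless: $u_2\varphi\in L^2_x$ whenever $u_2,\varphi\in L^4_x$, and $\partial_{x_2}\varphi\to\partial_{x_2}u_2$ in $L^2_x$. For the first term we need $u_1\varphi\to u_1u_2$ in $L^2_x$. We get this from the anisotropic estimate in the proof of Lemma \ref{prelim-regularity}:
\begin{equation*}
    \norm{u_1 w}_{L^2_x}\le 2\norm{u_1}^{1/2}_{L^2_x}\norm{\partial_{x_1}u_1}^{1/2}_{L^2_x}\norm{w}^{1/2}_{L^2_x}\norm{\partial_{x_2}w}^{1/2}_{L^2_x},
\end{equation*}
applied to $w=\varphi_n-u_2$, which tends to zero in $H^1_x$. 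This is valid for a.e. $t$, where $\partial_{x_1}u_1(t)\in L^2_x$. Hence the convective pairing vanishes for a.e. $t$, and it is integrable in time by the same bounds.

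Integrating the identity for $\frac{d}{dt}\norm{u_2}_{L^2_x}^2$ from $s$ to $t$ yields the claimed energy balance for every $0\le s<t\le T$.
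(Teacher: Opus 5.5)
Your proposal is correct and follows essentially the same route as the paper: you write the $u_2$-equation with every term in $L^2_tH^{-1}_x$, apply the Lions--Magenes lemma for continuity and the energy identity, and convert the pressure term via $\partial_{x_2}u_2=-\partial_{x_1}u_1$. The only variation is in the convective cancellation. The paper kills the two integrals separately by exact-derivative identities ($\int\partial_{x_2}(u_2^3)\,dx=0$, and an $x_1$-integration by parts plus incompressibility for the mixed term). You instead treat them jointly via the weak divergence constraint tested against $\varphi_n^2$, passing to the limit with the anisotropic product bound from Lemma \ref{prelim-regularity}, which is equally valid.
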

\begin{proof}
    For time continuity, we apply \cite[Section 5.9.2]{evans} with $u_2\in L^2_tH^1_x$ and $\partial_tu_2\in L^2_tH^{-1}_x$. The second component of the momentum equation in \eqref{ans} is 
    \begin{equation*}
        \partial_tu_2+\partial_{x_1}(u_1u_2)+\partial_{x_2}(u_2^2)+\partial_{x_2}p=\partial_{x_1}^2u_2.
    \end{equation*}
    From our previous results, we know that every term is in $L^2_tH^{-1}_x$, and thus we can test the equation against $u_2$, integrate it in space, and perform the usual integration by parts to obtain
    \begin{equation*}
        \frac{1}{2}\frac{d}{dt}\norm{u_2(t)}_{L^2_x}^2-\int_{\R^2}u_1u_2\partial_{x_1}u_2\;dx -\int_{\R^2}u_2^2\partial_{x_2}u_2\;dx-\int_{\R^2}p\partial_{x_2}u_2\;dx+\norm{\partial_{x_1}u_2(t)}_{L^2_x}^2=0.
    \end{equation*}
    Next, we want to show that both convective integrals vanish. First, observe that, for almost every time, $u_2^3\in L^1_x$ and $\partial_{x_2}(u_2^3)=3u_2^2\partial_{x_2}u_2\in L^1_x$ and thus
    \begin{equation*}
        \int_{\R^2}u_2^2\partial_{x_2}u_2\;dx=\frac{1}{3}\int_{\R^2}\partial_{x_2}(u_2)^3\;dx=0.
    \end{equation*}
    Similarly,
    \begin{equation*}
        \int_{\R^2}u_1u_2\partial_{x_1}u_2\;dx=-\frac{1}{2}\int_{\R^2}\partial_{x_1}u_1u_2^2\;dx=\frac{1}{2}\int_{\R^2}\partial_{x_2}u_2u_2^2\;dx=0,
    \end{equation*}
    where we also used incompressibility. Again by incompressibility,
    \begin{equation*}
        \int_{\R^2}p\partial_{x_2}u_2\;dx=-\int_{\R^2}p\partial_{x_1}u_1\;dx.
    \end{equation*}
    We conclude by integrating in time.
\end{proof}
\section{The horizontal component}
For $u_1$, we have no information about the vertical derivative, and thus no additional regularity beyond \eqref{leray-hopf-regularity}. Therefore, the previous argument fails, as we cannot test with $u_1$ and also lack integrability. The distinctive and to our knowledge new observation is that $u_1$ is still a renormalized solution, which suffices for the energy equality.
\subsection{The commutator}
Let $\eta\in C^\infty_c(\R^2)$ be a standard even Friedrichs mollifier. We set
\begin{equation*}
    f^{\eps} := \eta_{\eps}*f \text{ with }\eta_\eps(x)=\frac{1}{\eps^2}\eta\left(\frac{x}{\eps}\right) \text{ for }\eps>0.
\end{equation*}
Within our anisotropic setting, we show an analog to the classical commutator estimate of the DiPerna--Lions theory \cite{diperna-lions}. 
\begin{prop}\label{anisotropic-commutator}
    Let a scalar $f\in L^2_x$ and a weakly divergence-free vector field $v\in L^2_x$ satisfy $\partial_{x_1}f, \partial_{x_1}v\in L^2_x$. Then
    \begin{equation*}
        \diverg(f^{\eps}v-(fv)^{\eps})\to 0 \text{ strongly in }L^1_x \text{ for }\eps\to0.
    \end{equation*}
\end{prop}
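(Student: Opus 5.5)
The idea is to split the commutator into a horizontal and a vertical part. In the horizontal direction every derivative can be placed on objects we control. In the vertical direction the derivative falls on $v_2$ only, and by Lemma \ref{prelim-regularity} (applied at fixed time) $v_2$ has a full gradient in $L^2_x$. The missing derivative $\partial_{x_2}f$ (and $\partial_{x_2}v_1$) therefore never appears. Since $f^\eps$ is smooth and $v$ is weakly divergence-free, we have $\diverg(f^\eps v)=v\cdot\nabla f^\eps$ in the sense of distributions, and this lies in $L^1_x$ for fixed $\eps$. We then write
\begin{equation*}
\diverg(f^{\eps}v-(fv)^{\eps})=\underbrace{\big(v_1\partial_{x_1}f^\eps-\partial_{x_1}(fv_1)^\eps\big)}_{=:H_\eps}+\underbrace{\big(v_2\partial_{x_2}f^\eps-\partial_{x_2}(fv_2)^\eps\big)}_{=:V_\eps}.
\end{equation*}

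For the horizontal part we use that $f,\partial_{x_1}f,v_1,\partial_{x_1}v_1\in L^2_x$. Hence $fv_1\in L^1_x$ and $\partial_{x_1}(fv_1)=v_1\partial_{x_1}f+f\partial_{x_1}v_1\in L^1_x$ by the product rule for the $x_1$-derivative, justified by approximation. Mollification commutes with $\partial_{x_1}$, so $\partial_{x_1}(fv_1)^\eps=(v_1\partial_{x_1}f)^\eps+(f\partial_{x_1}v_1)^\eps\to v_1\partial_{x_1}f+f\partial_{x_1}v_1$ in $L^1_x$. Moreover $v_1\partial_{x_1}f^\eps=v_1(\partial_{x_1}f)^\eps\to v_1\partial_{x_1}f$ in $L^1_x$ by Cauchy--Schwarz. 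Consequently
\begin{equation*}
H_\eps\to -f\partial_{x_1}v_1 \quad\text{in } L^1_x.
\end{equation*}

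For the vertical part we use the DiPerna--Lions representation
\begin{equation*}
V_\eps(x)=\int_{\R^2}f(y)\big(v_2(x)-v_2(y)\big)\,\partial_{x_2}\eta_\eps(x-y)\,dy .
\end{equation*}
Since $v_2\in H^1_x$, with $\partial_{x_2}v_2=-\partial_{x_1}v_1$, the classical estimate applies. We write $v_2(x)-v_2(y)=\int_0^1\nabla v_2(x+\theta(y-x))\cdot(x-y)\,d\theta$ and use that $|z||\nabla\eta_\eps(z)|$ has $L^1$-norm bounded independently of $\eps$. Together with Minkowski this gives the uniform bound $\norm{V_\eps}_{L^1_x}\le C\norm{f}_{L^2_x}\norm{\nabla v_2}_{L^2_x}$. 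For smooth compactly supported $v_2$, a Taylor expansion together with $\int z_k\partial_{z_2}\eta_\eps(z)\,dz=-\delta_{k2}$ shows that $V_\eps\to -f\partial_{x_2}v_2$ in $L^1_x$. The bilinear uniform bound and density of $C^\infty_c$ in $H^1_x$ then extend this convergence to general $v_2\in H^1_x$ by a standard $\eps/3$ argument. Hence
\begin{equation*}
V_\eps\to -f\partial_{x_2}v_2=f\partial_{x_1}v_1 \quad\text{in } L^1_x,
\end{equation*}
using incompressibility once more.

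Adding the two limits, $H_\eps+V_\eps\to -f\partial_{x_1}v_1+f\partial_{x_1}v_1=0$ in $L^1_x$, which is the claim. The step requiring the most care is the vertical term. One must check that the DiPerna--Lions bound needs only $\nabla v_2\in L^2_x$ and $f\in L^2_x$, with no derivative of $f$. One must also justify rigorously the splitting $\diverg(f^\eps v)=v_1\partial_{x_1}f^\eps+v_2\partial_{x_2}f^\eps$ from weak incompressibility, which holds because $f^\eps\in C^\infty$ with all derivatives in $L^2_x$ and can be tested against $\psi\in C^\infty_c$. The key observation is that the two limit terms cancel exactly through $\partial_{x_2}v_2=-\partial_{x_1}v_1$.
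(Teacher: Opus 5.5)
Your proposal is correct and follows the paper's proof. The paper uses the same split into a horizontal commutator converging to $-f\partial_{x_1}v_1$ and a vertical commutator that only involves $\nabla v_2\in L^2_x$ and converges to $-f\partial_{x_2}v_2$, with the two limits cancelling by incompressibility. The only difference is technical: in the vertical term the paper passes to the limit directly, via the fundamental theorem of calculus and dominated convergence using continuity of translations in $L^2_x$, whereas you take the classical DiPerna--Lions route of an $\eps$-uniform bilinear bound plus density of $C^\infty_c$ in $H^1_x$. Your route is equally valid, and your kernel identity $\int z_k\partial_{z_2}\eta\,dz=-\delta_{k2}$ is the precise form of the paper's kernel computation.
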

\begin{proof}
    First, we split the commutator by partial derivatives
    \begin{align*}
        \diverg(f^{\eps}v-(fv)^{\eps})= R_\eps^{(1)}+R_\eps^{(2)} \text{ with }&R_\eps^{(1)}:=v_1\partial_{x_1}f^{\eps}-\partial_{x_1}(v_1f)^{\eps}\text{ and }R_\eps^{(2)}:=v_2\partial_{x_2}f^{\eps}-\partial_{x_2}(v_2f)^{\eps}.
    \end{align*}
    For the first component, commutation of convolutions with derivatives yields
    \begin{align*}
        R_\eps^{(1)}&=v_1\partial_{x_1}f^{\eps}-\partial_{x_1}(v_1f)^{\eps}=v_1(\partial_{x_1}f)^{\eps}-(\partial_{x_1}(v_1f))^{\eps}\\
        &=v_1(\partial_{x_1}f)^{\eps}-(v_1\partial_{x_1}f))^{\eps}-(f\partial_{x_1}v_1))^{\eps}\to-f\partial_{x_1}v_1 \text{ in }L^1_x\text{ for }\eps\to0.
    \end{align*}
    For the second component, recall that we have $v_2\in H^1_x$ by the divergence constraint. This allows us to compensate the mission control of $\partial_{x_2}f$. We use the fundamental theorem of calculus together with substitution to obtain
    \begin{align*}
        R_\eps^{(2)}(x)=&\int_{\R^2}\partial_{y_2}\eta_{\eps}(y)(v_2(x)-v_2(x-y))f(x-y)\;dy\\
        =&\frac{1}{\eps}\int_{\R^2}\partial_{z_2}\eta(z)(v_2(x)-v_2(x-\eps z))f(x-\eps z)\;dz\\
        =&\frac{1}{\eps}\int_{\R^2}\partial_{z_2}\eta(z)\int_0^1 \eps z\cdot\nabla v_2(x-\theta \eps z)\;d\theta f(x-\eps z)\;dz\\
        =&\sum_{i=1}^2\int_{\R^2}\partial_{z_2}\eta(z)z_i\int_0^1\partial_{x_i}v_2(x-\theta\eps z) f(x-\eps z)\;d\theta\;dz.
    \end{align*}
    Now, we want to go to the limit with the dominated convergence theorem. For fixed $z$ and $\theta$, we have, by continuity of translations in $L^2$, that
    \begin{equation*}
        \partial_{x_i}v_2(\cdot-\theta\eps z) f(\cdot-\eps z)\to\partial_{x_i}v_2 f \text{ in }L^1_x\text{ for }\eps\to0.
    \end{equation*}
    Moreover, obviously the product $\partial_{x_i}v_2(\cdot-\theta\eps z) f(\cdot-\eps z)$ is uniformly bounded and $\partial_{z_2}\eta_{\eps}(z)z_i$ is integrable, hence dominated convergence gives
    \begin{equation*}
        R_\eps^{(2)} \to \sum_{i=1}^2\left(\int_{\R^2}\partial_{z_2}\eta(z)z_i\;dz\right)\partial_{x_i}v_2f\text{ in }L^1_x\text{ for }\eps\to0.
    \end{equation*}
    Regarding the kernels, we have with integration by parts
    \begin{equation*}
        \sum_{i=1}^2\left(\int_{\R^2}\partial_{z_2}\eta(z)z_i\;dz\right)=-\sum_{i=1}^2\left(\int_{\R^2}\eta(z)\partial_{z_2}z_i\;dz\right)=-1.
    \end{equation*}
    In summary, we obtain
    \begin{equation*}
        R_\eps^{(1)}+R_\eps^{(2)}\to -(\partial_{x_1}v_1+\partial_{x_2}v_2)f=0\text{ in }L^1_x\text{ for }\eps\to0,
    \end{equation*}
    due to incompressibility.
    
\end{proof}
\subsection{Renormalization}
\begin{prop}\label{renormalizatin-u-1}
    Let $u$ be a weak solution to \eqref{ans}. Then the unique representative of $u_1 \in C_w([0,T];L^2_x)$ is a renormalized solution, i.e., for every $\beta\in C^2(\R)$ with $\beta(0)=\beta'(0)=0$, $\beta',\beta''\in L^\infty(\R)$ and every $0\leq s < t \leq T$, we have
    \begin{equation*}
        \int_{\R^2}\beta(u_1(t))\;dx+\int_s^t\int_{\R^2}\beta''(u_1)(\partial_{x_1}u_1)^2\;dx\;d\tau=\int_{\R^2}\beta(u_1(s))\;dx+\int_s^t\int_{\R^2}p\beta''(u_1)\partial_{x_1}u_1\;dx\;d\tau.
    \end{equation*}
\end{prop}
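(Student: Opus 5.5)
We mollify the first component of the momentum equation in space, apply the chain rule to the smooth quantity, and pass to the limit with Proposition \ref{anisotropic-commutator} and the square-integrability of $p$ from Proposition \ref{prop-pressure-time-regularity}.

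\textbf{Mollified equation.} Using $\diverg u=0$, the first equation reads
\begin{equation*}
\partial_t u_1+\diverg(u_1u)+\partial_{x_1}p=\partial_{x_1}^2u_1 .
\end{equation*}
By Proposition \ref{prop-pressure-time-regularity} it holds in $L^2_tH^{-1}_x$. Mollifying in space gives
\begin{equation*}
\partial_t u_1^\eps+\diverg(u_1^\eps u)+\partial_{x_1}p^\eps-\partial_{x_1}^2u_1^\eps=r_\eps,\qquad r_\eps:=\diverg\bigl(u_1^\eps u-(u_1u)^\eps\bigr).
\end{equation*}
For a.e.\ $\tau$, Proposition \ref{anisotropic-commutator} with $f=u_1(\tau)$ and $v=u(\tau)$ gives $r_\eps(\tau)\to0$ in $L^1_x$. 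To get $r_\eps\to0$ in $L^1_tL^1_x$ I use dominated convergence in time. The proof of that proposition yields the pointwise bound
\begin{equation*}
\norm{r_\eps(\tau)}_{L^1_x}\le C\norm{u_1}_{L^2_x}\bigl(\norm{\partial_{x_1}u}_{L^2_x}+\norm{\nabla u_2}_{L^2_x}\bigr)\le C'\norm{u_1}_{L^2_x}\norm{\partial_{x_1}u}_{L^2_x},
\end{equation*}
which lies in $L^2_t\subset L^1_t$. The function $u_1^\eps$ is smooth in $x$, with $\partial_tu_1^\eps\in L^2_tL^2_x$ (mollification maps $H^{-1}$ into $L^2$, and in fact into every $H^k$). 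Hence $\tau\mapsto\int\beta(u_1^\eps)\,dx$ is absolutely continuous. Indeed $|\beta(y)|\le\frac12\|\beta''\|_\infty y^2$, so $\beta(u_1^\eps)\in L^1_x$.

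\textbf{Chain rule.} Multiply by $\beta'(u_1^\eps)$, which is bounded and satisfies $|\beta'(y)|\le C|y|$, and integrate over $(s,t)\times\R^2$, treating each term separately.

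The transport term vanishes: $\int\beta'(u_1^\eps)\diverg(u_1^\eps u)\,dx=\int\diverg(\beta(u_1^\eps)u)\,dx=0$. This uses $\diverg u=0$, and the integral of the divergence vanishes because $\beta(u_1^\eps)u\in W^{1,1}_x$.

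Integrating by parts in $x_1$ turns the diffusion term into $\int\beta''(u_1^\eps)(\partial_{x_1}u_1^\eps)^2\,dx$. The pressure term becomes $-\int p^\eps\beta''(u_1^\eps)\partial_{x_1}u_1^\eps\,dx$.

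The remainder term is bounded by $\|\beta'\|_\infty\|r_\eps\|_{L^1_{t,x}}\to0$.

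This gives, for a.e.\ $s<t$ (Lebesgue points),
\begin{equation*}
\int\beta(u_1^\eps(t))+\int_s^t\!\!\int\beta''(u_1^\eps)(\partial_{x_1}u_1^\eps)^2=\int\beta(u_1^\eps(s))+\int_s^t\!\!\int p^\eps\beta''(u_1^\eps)\partial_{x_1}u_1^\eps+o(1).
\end{equation*}

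\textbf{Passage to the limit.} As $\eps\to0$ we have $u_1^\eps\to u_1$ in $L^2_{t,x}$ and $\partial_{x_1}u_1^\eps\to\partial_{x_1}u_1$ in $L^2_{t,x}$. Passing to a subsequence, $u_1^\eps\to u_1$ a.e. Since $\beta''$ is continuous and bounded, $\beta''(u_1^\eps)\to\beta''(u_1)$ a.e.

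For the diffusion term, $(\partial_{x_1}u_1^\eps)^2\to(\partial_{x_1}u_1)^2$ in $L^1$. A generalized dominated convergence argument (split the product, using boundedness and a.e.\ convergence of $\beta''(u_1^\eps)$) then gives convergence of the integral.

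For the pressure term, $p^\eps\partial_{x_1}u_1^\eps\to p\,\partial_{x_1}u_1$ in $L^1$, because $p\in L^2_tL^2_x$. This is exactly where the square-integrable pressure is essential. The same argument as for the diffusion term then applies.

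The endpoint terms converge at a.e.\ fixed time, since $|\beta(a)-\beta(b)|\le C(|a|+|b|)|a-b|$ and $u_1^\eps(\tau)\to u_1(\tau)$ in $L^2_x$.

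\textbf{Extension to all times.} It remains to extend the identity from a.e.\ $s,t$ to every $s<t$ for the weakly continuous representative. I expect this step to be the main obstacle, since $\beta$ is not convex in general and weak continuity alone does not control $\int\beta(u_1)$.

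My first attempt: fix $\tau$ and choose a.e.\ times $t_n\to\tau$. The time-integral terms are continuous in their endpoints. Thus $\int\beta(u_1(t_n))$ converges to some limit $L(\tau)$, and I must show $L(\tau)=\int\beta(u_1(\tau))$.

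To do this, I first apply the a.e.\ identity with $\beta(y)=y^2$ (admissible, since $\beta'(0)=0$ and $\beta''=2$). This gives continuity of $\norm{u_1(t)}_{L^2_x}$ on the full-measure set. Since $u_2\in C([0,T];L^2_x)$ by Proposition \ref{vertical-energy}, this should upgrade to strong continuity of $u_1$ once weak continuity is used to identify the limit: the norms converge along $t_n$, and weak convergence then forces strong convergence. The norm at $\tau$ itself is handled by lower semicontinuity, combined with the quadratic identity applied on both sides of $\tau$.

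Once $u_1\in C([0,T];L^2_x)$, the estimate $|\beta(a)-\beta(b)|\le C(|a|+|b|)|a-b|$ gives continuity of $\int\beta(u_1(\tau))$. The identity then holds for every $0\le s<t\le T$.
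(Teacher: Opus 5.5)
Your mollification, chain-rule and $\eps\to0$ arguments are those of the paper. Your $L^2_t$ bound on $\norm{r_\eps(\tau)}_{L^1_x}$ even supplies the time domination that the paper leaves implicit. The gap is in the step you flag yourself: extending the identity from a.e.\ $s<t$ to all $s<t$ fails as written, for two reasons.

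\emph{First}, $\beta(y)=y^2$ is not admissible. The statement requires $\beta'\in L^\infty(\R)$, and $\beta'(y)=2y$ is unbounded. This matters, because your remainder estimate $\abs{\int\beta'(u_1^\eps)r_\eps}\le\norm{\beta'}_{L^\infty}\norm{r_\eps}_{L^1}$ then breaks down: $u_1^\eps$ is only controlled in $L^2_x$, while $r_\eps\to0$ only in $L^1_x$. This is exactly the obstruction to testing the $u_1$-equation with $u_1$ that renormalization is designed to avoid. In the paper the quadratic identity is reached only afterwards, through truncations $\beta_R$ and $R\to\infty$, in the proof of Theorem \ref{thm-energy-rigidity}.

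\emph{Second}, grant the quadratic identity on a full-measure set of times. At an exceptional time $\tau$, weak lower semicontinuity still only gives $\norm{u_1(\tau)}_{L^2_x}^2\le\liminf_n\norm{u_1(t_n)}_{L^2_x}^2$. Applying the identity between good times on both sides of $\tau$ never involves $u_1(\tau)$, and applying it with $\tau$ as an endpoint is what you are trying to prove. A weakly continuous $L^2_x$-valued curve can lose norm at a single instant, so almost-everywhere information plus weak continuity cannot close this.

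The step is in fact unnecessary, and the paper avoids it. You already observed that $\tau\mapsto\int\beta(u_1^\eps(\tau))\,dx$ is absolutely continuous. Hence the mollified identity holds for every $0\le s<t\le T$ with the continuous representative of $u_1^\eps\in W^{1,2}(0,T;L^2_x)$. That representative equals $\eta_\eps*u_1(\tau)$ at every $\tau$, where $u_1$ is the weakly continuous representative, because both curves are weakly continuous in $L^2_x$ and agree a.e. Therefore $u_1^\eps(\tau)\to u_1(\tau)$ in $L^2_x$ for every $\tau$. Your bound $\abs{\beta(a)-\beta(b)}\le C(\abs{a}+\abs{b})\abs{a-b}$ then gives convergence of the endpoint terms at all times, and no continuity upgrade is needed.

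A minor point: $\beta(u_1^\eps)u$ is not in $W^{1,1}_x$, since $\partial_{x_2}u_1$ is uncontrolled. You only have $\beta(u_1^\eps)u\in L^1_x$ with divergence $\beta'(u_1^\eps)\,u\cdot\nabla u_1^\eps\in L^1_x$. The vanishing of its integral therefore needs a cutoff $\chi_R$ argument, as in the paper.
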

\begin{proof}
    For $u_1^\eps$, we consider the corresponding momentum equation
    \begin{equation}\label{ans-u-1-eps}
        \partial_tu_1^{\eps}+\diverg(u_1^{\eps}u)+ \partial_{x_1}p_{\eps}-\partial_{x_1}^2u_1^{\eps}= R_\eps,
    \end{equation}
    where $R_\eps:=\diverg(u_1^{\eps}u-(u_{1}u)^{\eps})$. Let us start with some preliminary observations. Recall that $\partial_tu_1\in L^2(0,T;H^{-1}_x)$, and thus we have by standard properties of the convolution that $u_1^{\eps}\in W^{1,2}(0,T;L^2_x)\hookrightarrow C([0,T];L^2_x)$, which is sufficient for the chain rule for Sobolev functions \cite[Theorem 4.4]{evans2}. Before doing so, we need to choose a cutoff $\chi\in C_c^\infty(\R^2)$ with $0\leq\chi\leq1$, $\chi=1$ on $B_1(0)$, and set $\chi_{R}(x)=\chi(x/R)$. Then, we get with the chain rule and the usual integration by parts
    \begin{align*}
        &\int_{\R^2}\chi_R\beta(u_1^{\eps}(t))\;dx -\int_{\R^2}\chi_R\beta(u_1^{\eps}(s))\;dx+ \int_s^t\int_{\R^2}\chi_R\beta''(u_1^{\eps})((\partial_{x_1}u_1^{\eps})^2-p_{\eps}\partial_{x_1}u_1^{\eps})\;dx\;d\tau\\&=\int_s^t\int_{\R^2}\chi_R\beta'(u_1^{\eps})R_{\eps}\;dx\;d\tau+\int_s^t\int_{\R^2}\nabla\chi_R\cdot\left[\beta(u_1^{\eps})u+\beta'(u_1^{\eps})p_\eps e_1-\beta'(u_1^{\eps})\partial_{x_1}u_1^{\eps} e_1\right]\;dx\;d\tau.
    \end{align*}
    By the choice of $\beta$, we have sufficient integrability for the limit $R\to\infty$. Since $\norm{\nabla\chi_R}_{L^\infty_x}\leq \frac{C}{R}$, the second term on the right hand-side vanishes
    \begin{align*}
        \int_{\R^2}\beta(u_1^{\eps}(t))\;dx &-\int_{\R^2}\beta(u_1^{\eps}(s))\;dx+ \int_s^t\int_{\R^2}\beta''(u_1^{\eps})((\partial_{x_1}u_1^{\eps})^2-p_{\eps}\partial_{x_1}u_1^{\eps})\;dx\;d\tau\\&=\int_s^t\int_{\R^2}\beta'(u_1^{\eps})R_{\eps}\;dx\;d\tau.
    \end{align*}
    Finally, we let $\eps\to0$. For the right hand-side, we use Proposition \ref{anisotropic-commutator} and the boundedness of $\beta'$ to deduce
    \begin{equation*}
        \int_s^t\int_{\R^2}\beta'(u_1^{\eps})R_{\eps}\;dx\;d\tau \to 0.
    \end{equation*}
    For the left hand-side, we first consider the endpoint integrals. With the fundamental theorem of calculus, we get, for $r\in\set{s,t}$,
    \begin{align*}
        \norm{\beta(u_1^{\eps}(r))-\beta(u_{1}(r))}_{L^1_x}&= \norm{(u_1^{\eps}(r)-u_{1}(r))\int_0^1\beta'((1-\theta)u_{1}(r)+\theta u_1^{\eps}(r))\;d\theta}_{L^1_x}\\
        &\leq \frac{1}{2}\norm{\beta''}_{L^\infty(\R)}\left(\norm{u_1^{\eps}(r)}_{L^2_x}+\norm{u_{1}(r)}_{L^2_x}\right)\norm{u_1^{\eps}(r)-u_{1}(r)}_{L^2_x}\\
        &\to 0,
    \end{align*}
    as $u_1^{\eps}(r)\to u_{1}(r)$ in $L^2_x$. It remains to pass to the limit in the bulk integral
    \begin{align*}
        &\norm{\beta''(u_1^{\eps})((\partial_{x_1}u_1^{\eps})^2-p_{\eps}\partial_{x_1}u_1^{\eps})-\beta''(u_{1})((\partial_{x_1}u_{1})^2-p\partial_{x_1}u_{1})}_{L^1(s,t;L^1_x)}\\&\leq \norm{[\beta''(u_1^{\eps})-\beta''(u_{1})]((\partial_{x_1}u_{1})^2-p\partial_{x_1}u_{1})}_{L^1(s,t;L^1_x)}\\&+\norm{\beta''}_{L^\infty(\R)}\norm{(\partial_{x_1}u_1^{\eps})^2-(\partial_{x_1}u_{1})^2}_{L^1(s,t;L^1_x)}+\norm{\beta''}_{L^\infty(\R)}\norm{p_{\eps}\partial_{x_1}u_1^{\eps}-p\partial_{x_1}u_{1}}_{L^1(s,t;L^1_x)}\\
        &\leq \norm{[\beta''(u_1^{\eps})-\beta''(u_{1})]((\partial_{x_1}u_{1})^2-p\partial_{x_1}u_{1})}_{L^1(s,t;L^1_x)}\\&+C\norm{\partial_{x_1}u_1^{\eps}+\partial_{x_1}u_{1}}_{L^2(s,t;L^2_x)}\norm{\partial_{x_1}u_1^{\eps}-\partial_{x_1}u_{1}}_{L^2(s,t;L^2_x)}\\&+C\norm{p_{\eps}-p}_{L^2(s,t;L^2_x)}\norm{\partial_{x_1}u_1^{\eps}}_{L^2(s,t;L^2_x)}+C\norm{p}_{L^2(s,t;L^2_x)}\norm{\partial_{x_1}u_1^{\eps}-\partial_{x_1}u_1}_{L^2(s,t;L^2_x)}\\
        &\to 0,
    \end{align*}
    where, for the first term, we used the stability of convergence in measure under continuous compositions.
\end{proof}
\begin{proof}[Proof of Theorem \ref{thm-energy-rigidity}]
    To begin with, we will show the energy equality for the horizontal component. Let $\psi\in C^\infty_c(\R)$ with $0\leq\psi\leq1$, $\psi=1$ on $[-1,1]$, $\psi(z)=\psi(-z)$, and define 
    \begin{equation*}
        \beta''_R(z):=\psi(z/R),\quad \beta'_R(z):=\int_0^z\beta''_R(y)\;dy,\quad\beta_R(z):=\int_0^z\beta'_R(y)\;dy.
    \end{equation*}
    Observe that, $0\leq \beta_R(z)\leq\frac{1}{2}z^2$, $0\leq\beta''_R\leq1$, and, as $R\to\infty$, $\beta_R(z)\to \frac{1}{2}z^2$ and $\beta_R''(z)\to 1$ pointwise. Moreover, $\beta_R$ is a valid renormalization for Proposition \ref{renormalizatin-u-1} and, thus, we get by dominated convergence
    \begin{equation*}
        \frac{1}{2}\norm{u_1(t)}_{L^2_x}^2+\int_s^t\norm{\partial_{x_1}u_1(\tau)}_{L^2_x}^2\;d\tau=\frac{1}{2}\norm{u_1(s)}_{L^2_x}^2+\int_s^tp\int_{\R^2}\partial_{x_1}u_1\;dx\;d\tau,
    \end{equation*}
    which implies the energy equality \eqref{energy-equality} together with Proposition \ref{vertical-energy}. It remains to show the strong time continuity. With the energy equality \eqref{energy-equality}, we know that $t\to\norm{u(t)}_{L^2_x}$ is continuous. Since we also have weak time continuity, shown in Lemma \ref{weak-time-continuity}, this already implies strong time continuity in $L^2_x$. The uniqueness of the representative follows as two continuous functions agreeing almost everywhere agree everywhere.
\end{proof}
\section{Weak-strong uniqueness}
With the energy equality \eqref{energy-equality} at hand, weak-strong uniqueness follows by the classical comparison argument.
\begin{proof}[Proof of Theorem \ref{weak-strong-uniqueness}]
    We want to derive an energy balance for $w:=u-U$, but we need some preliminary work before doing so. To begin, both $u$ and $U$ are weak solutions to \eqref{ans} and thus satisfy the energy equality \eqref{energy-equality} by Theorem \ref{thm-energy-rigidity}. Regarding the cross term, we use again mollification. For $\eps>0$, we consider the space-mollified $u^\eps$ and $U^\eps$. As we have previously seen, we have with Proposition \ref{prop-pressure-time-regularity}
    \begin{equation*}
        u^\eps,U^\eps \in W^{1,2}(0,T;L^2_x).
    \end{equation*}
    By the product rule for Sobolev functions, we get
    \begin{equation}\label{time-product-rule}
        \frac{d}{dt}\int_{\R^2}u^{\eps}\cdot U^{\eps}\;dx = \int_{\R^2}\partial_tu^\eps\cdot U^\eps\;dx + \int_{\R^2}u^\eps \cdot\partial_tU^\eps\;dx.
    \end{equation}
    Again by Proposition \ref{prop-pressure-time-regularity} and Lemma \ref{prelim-regularity}, we know that for both $u$ and $U$, the momentum equation in \eqref{ans} holds in $L^2(0,T;H^{-1}_x)$ and therefore the convolved counterparts in $L^2(0,T;L^2_x)$. Hence, it is justified to insert them into \eqref{time-product-rule}, which yields together with time integration
    \begin{align*}
        &\int_{\R^2}u^{\eps}(t)\cdot U^{\eps}(t)\;dx - \int_{\R^2}u^{\eps}(s)\cdot U^{\eps}(s)\;dx +2\int_s^t\int_{\R^2}\partial_{x_1}u^\eps\cdot \partial_{x_1} U^\eps\;dx\;d\tau\\&=-\int_s^t\int_{\R^2}\diverg(u\otimes u)^\eps \cdot U^\eps\;dx\;d\tau-\int_s^t\int_{\R^2}\diverg(U\otimes U)^\eps \cdot u^\eps\;dx\;d\tau,
    \end{align*}
     for $0\leq s \leq t \leq T$. The pressure term vanishes, as the convolved velocities remain divergence-free. The pointwise $L^2_x$-convergence of the convolution is sufficient for the product terms to pass to the limit pointwise in $L^1_x$. Concerning the trilinear forms, we need a closer look. As our mollifier is even, it is self-adjoint, which yields together with incompressibility and integration by parts that
    \begin{align*}
        \int_{\R^2}\diverg(u\otimes u)^\eps \cdot U^\eps\;dx&=\int_{\R^2}\diverg(u\otimes u) \cdot (U^\eps)^\eps\;dx=-\int_{\R^2}u\otimes u: \nabla (U^\eps)^\eps\;dx\\&=-\int_{\R^2}(u_1^2-u_2^2)\partial_{x_1}(U_1^\eps)^\eps+u_1u_2(\partial_{x_2}(U_1^\eps)^\eps+\partial_{x_1}(U_2^\eps)^\eps)\;dx\\
        &=\int_{\R^2}2u_1\partial_{x_1}u_1(U_1^\eps)^\eps+u_2^2\partial_{x_1}(U_1^\eps)^\eps-u_1u_2(\partial_{x_2}(U_1^\eps)^\eps+\partial_{x_1}(U_2^\eps)^\eps)\;dx
    \end{align*}
    and 
    \begin{equation*}
        \int_{\R^2}\diverg(U\otimes U)^\eps \cdot u^\eps\;dx=\int_{\R^2}\diverg(U\otimes U) \cdot (u^\eps)^\eps\;dx=\int_{\R^2}(U\cdot\nabla U)\cdot(u^\eps)^\eps\;dx.
    \end{equation*}
    For a clearer notation, we write $v^\eps$ instead of $(v^\eps)^\eps$. With similar computations as in Lemma \ref{prelim-regularity}, we can pass to the limit in
    \begin{align*}
        &\int_s^t\int_{\R^2}\abs{2u_1\partial_{x_1}u_1(U_1^\eps-U_1)+u_2^2\partial_{x_1}(U_1^\eps-U_1)-u_1u_2(\partial_{x_2}(U_1^\eps -U_1)+\partial_{x_1}(U_2^\eps-U_2))}\;dx\;d\tau\\&\leq2\int_s^t\norm{u_1}_{L^\infty_{x_1}L^2_{x_2}}\norm{U_1^\eps-U_1}_{L^2_{x_1}L^\infty_{x_2}}\norm{\partial_{x_1}u_1}_{L^2_{x}}\;d\tau+\left(\norm{u_2^2}_{L^2_tL^2_{x}}+\norm{u_1u_2}_{L^2_tL^2_{x}}\right)\norm{\nabla U^\eps-\nabla U}_{L^2_tL^2_{x}}\\&\leq C\norm{u_1}_{L^\infty_tL^2_{x}}^{1/2}\norm{\partial_{x_1}u_1}_{L^2_tL^2_{x}}^{3/2}\norm{U_1^\eps-U_1}_{L^\infty_tL^2_{x}}^{1/2}\norm{\partial_{x_2}U_1^\eps-\partial_{x_2}U_1}_{L^2_tL^2_{x}}^{1/2}\\&+\left(\norm{u_2^2}_{L^2_tL^2_{x}}+\norm{u_1u_2}_{L^2_tL^2_{x}}\right)\norm{\nabla U^\eps-\nabla U}_{L^2_tL^2_{x}}\to 0,
    \end{align*}
    and
    \begin{align*}
        \int_s^t\int_{\R^2}\abs{(U\cdot\nabla U)\cdot(u^\eps-u)}\;dx\;d\tau&\leq \int_s^t\norm{\nabla U}_{L^2_x}\norm{U}_{L^2_{x_1}L^\infty_{x_2}}\norm{u^\eps-u}_{L^\infty_{x_1}L^2_{x_2}}\;d\tau\\
        &\leq\int_s^t\norm{\nabla U}_{L^2_x}^{3/2}\norm{U}_{L^2_{x}}^{1/2}\norm{u^\eps-u}_{L^2_x}^{1/2}\norm{\partial_{x_1}u^\eps-\partial_{x_1}u}_{L^2_x}^{1/2}\;d\tau\\
        &\leq \norm{\nabla U}_{L^2_tL^2_x}^{3/2}\norm{U}_{L^\infty_tL^2_{x}}^{1/2}\norm{u^\eps-u}_{L^\infty_tL^2_x}^{1/2}\norm{\partial_{x_1}u^\eps-\partial_{x_1}u}_{L^2_tL^2_x}^{1/2}\\
        &\to 0.
    \end{align*}
    Note that we here relied on the additional regularity $\partial_{x_2}U_1\in L^2_tL^2_x$. In total, we arrive at 
    \begin{equation}\label{cross-terms}
        \begin{aligned}
            &2\int_{\R^2}u(t)\cdot U(t)\;dx - 2\int_{\R^2}u(s)\cdot U(s)\;dx +4\int_s^t\int_{\R^2}\partial_{x_1}u\cdot \partial_{x_1} U\;dx\;d\tau\\&=-2\int_s^t\int_{\R^2}(u\cdot\nabla u) \cdot U\;dx\;d\tau-2\int_s^t\int_{\R^2}(U\cdot\nabla U) \cdot u\;dx\;d\tau.
        \end{aligned}
    \end{equation}
    Adding both energy equalities for $u$ and $U$, and subtracting \eqref{cross-terms} yields
    \begin{equation}\label{w-energy-balance}
        \norm{w(t)}_{L^2_x}^2+2\int_s^t\norm{\partial_{x_1}w}_{L^2_x}^2\;d\tau=\norm{w(s)}_{L^2_x}^2-2\int_s^t\int_{\R^2}(w\cdot\nabla U)\cdot w\;dx\;d\tau,
    \end{equation}
    where we used the identities
    \begin{equation*}
        \int_{\R^2}(u\cdot\nabla u)\cdot U\;dx=-\int_{\R^2}(u\cdot\nabla U)\cdot u\;dx,\quad \int_{\R^2}(w\cdot\nabla U)\cdot U\;dx=\frac{1}{2}\int_{\R^2}w\cdot\nabla\abs{U}^2\;dx=0.
    \end{equation*}
    We want to conclude with Gr\"onwall's lemma, therefore we need to bound the second term on the right hand-side of \eqref{w-energy-balance}. We use the same estimate as above, the argument of Lemma \ref{prelim-regularity}, and Young's inequality to get
    \begin{align*}
        &\abs{\int_s^t\int_{\R^2}(w\cdot\nabla U)\cdot w\;dx\;d\tau}\\&\leq \int_s^tC\norm{w_1}_{L^2_{x}}^{1/2}\norm{\partial_{x_1}w_1}_{L^2_{x}}^{3/2}\norm{U_1}_{L^2_{x}}^{1/2}\norm{\partial_{x_2}U_1}_{L^2_{x}}^{1/2}+\left(\norm{w_2^2}_{L^2_{x}}+\norm{w_1w_2}_{L^2_{x}}\right)\norm{\nabla U}_{L^2_{x}}\;d\tau\\
        &\leq \int_s^tC\norm{w_1}_{L^2_{x}}^{1/2}\norm{\partial_{x_1}w_1}_{L^2_{x}}^{3/2}\norm{U_1}_{L^2_{x}}^{1/2}\norm{\partial_{x_2}U_1}_{L^2_{x}}^{1/2}\\&+2\left(\norm{w_2}_{L^2_{x}}\norm{\partial_{x_1}w}_{L^2_{x}}+\norm{w_1}_{L^2_{x}}^{1/2}\norm{w_2}_{L^2_{x}}^{1/2}\norm{\partial_{x_1}w}_{L^2_{x}}\right)\norm{\nabla U}_{L^2_{x}}\;d\tau\\
        &\leq \int_s^t \frac{1}{6}\norm{\partial_{x_1}w_1}_{L^2_{x}}^{2} + C\norm{w_1}_{L^2_{x}}^{2}\norm{U_1}_{L^2_{x}}^{2}\norm{\partial_{x_2}U_1}_{L^2_{x}}^{2} + \frac{1}{6}\norm{\partial_{x_1}w}_{L^2_{x}}^{2} + C\norm{w_2}_{L^2_{x}}^2\norm{\nabla U}_{L^2_{x}}^2\\
        &+ \frac{1}{6}\norm{\partial_{x_1}w}_{L^2_{x}}^{2}+C\norm{w_1}_{L^2_{x}}\norm{w_2}_{L^2_{x}}\norm{\nabla U}_{L^2_{x}}^2\;d\tau\\
        &\leq \frac{1}{2}\int_s^t\norm{\partial_{x_1}w}_{L^2_{x}}^{2}\;d\tau+C\int_s^t\norm{w}_{L^2_{x}}^{2}\left(1+\norm{U}_{L^2_{x}}^{2}\right)\norm{\nabla U}_{L^2_{x}}^{2}\;d\tau.
    \end{align*}
    Going back to \eqref{w-energy-balance}, we obtain
    \begin{equation*}
        \norm{w(t)}_{L^2_x}^2+\int_s^t\norm{\partial_{x_1}w(\tau)}_{L^2_x}^2\;d\tau\leq\norm{w(s)}_{L^2_x}^2+\int_s^tA(\tau)\norm{w(\tau)}_{L^2_x}^2\;d\tau,
    \end{equation*}
    with $A(\tau):=C\left(1+\norm{U(\tau)}_{L^2_{x}}^{2}\right)\norm{\nabla U(\tau)}_{L^2_{x}}^{2}\in L^1(0,T)$. Hence, we can apply Gr\"onwall's lemma for $s=0$ with $w(s)=0$ to get
    \begin{equation*}
        \norm{w(t)}_{L^2_x}^2=0 \text{ for all }t\in[0,T].
    \end{equation*}
\end{proof}
\appendix
\section{Auxiliary analytic results}
We use the unitary Fourier transform
\begin{equation}\label{fourier}
    \widehat f(\xi):=\frac{1}{2\pi}
\int_{\mathbb{R}^2}e^{-ix\cdot\xi}f(x)\;dx.
\end{equation}
\begin{lem}[Riesz transform]\label{riesz}
    Let $f\in\mathcal{S}(\R^2)$. The $i$-th Riesz transform, defined by
    \begin{equation*}
        \widehat{R_if}(\xi):=-i\frac{\xi_i}{\abs{\xi}}\widehat{f(\xi)}, \text{ for }\xi\neq0,
    \end{equation*}
    extends to a bounded operator on $L^p_x$ for every $p\in(1,\infty)$,
    \begin{equation*}
        \norm{R_if}_{L^p_x}\leq C_p\norm{f}_{L^p_x},.
    \end{equation*}
    with $C_2=1$.
\end{lem}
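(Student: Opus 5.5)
The statement is the Riesz transform lemma. The case $p=2$ is immediate, so I would treat it first and then handle general $p$ by standard singular-integral theory.

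For $p=2$ I use Plancherel with the unitary Fourier transform \eqref{fourier}. The multiplier $m_i(\xi)=-i\xi_i/\abs{\xi}$ is measurable and satisfies $\abs{m_i(\xi)}\le 1$ for $\xi\neq0$; the origin is a null set and does not matter. Hence
\begin{equation*}
\norm{R_if}_{L^2_x}=\norm{m_i\widehat f}_{L^2_\xi}\le\norm{\widehat f}_{L^2_\xi}=\norm{f}_{L^2_x}.
\end{equation*}
Taking $f$ with $\widehat f$ concentrated in a cone around the $\xi_i$-axis shows the constant $1$ is sharp, so $C_2=1$. Since $\mathcal S(\R^2)$ is dense in $L^2_x$, $R_i$ extends uniquely to a bounded operator on $L^2_x$.

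For $p\neq2$ I would identify $R_i$ as a Calder\'on--Zygmund convolution operator. Its kernel is
\begin{equation*}
K_i(x)=c\,\frac{x_i}{\abs{x}^{3}}
\end{equation*}
in two dimensions, understood as a principal value distribution. This kernel is homogeneous of degree $-2$, smooth away from the origin, and odd, so its mean over the unit circle vanishes. I would derive the kernel by computing the inverse Fourier transform of $-i\xi_i/\abs{\xi}$, using that $\abs{\xi}^{-1}$ has inverse transform $c\abs{x}^{-1}$ in $\R^2$ and differentiating. The kernel satisfies the H\"ormander condition $\int_{\abs{x}>2\abs{y}}\abs{K_i(x-y)-K_i(x)}\,dx\le C$, because $\abs{\nabla K_i(x)}\lesssim\abs{x}^{-3}$.

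With $L^2$-boundedness and the H\"ormander condition, the Calder\'on--Zygmund decomposition gives the weak-type $(1,1)$ bound. Marcinkiewicz interpolation then gives boundedness for $1<p\le2$. For $2<p<\infty$ I would use duality: the adjoint of $R_i$ is $-R_i$, since the multiplier is purely imaginary and odd. Density of $\mathcal S(\R^2)$ in $L^p_x$ then yields the unique bounded extension.

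The main obstacle is the weak $(1,1)$ step via the Calder\'on--Zygmund decomposition. Since the lemma is a classical auxiliary result, in the paper I would cite a standard reference, such as Stein's \emph{Singular Integrals} or Grafakos, for that step rather than reprove it. Only the $L^2$ computation, which is the case actually used (Lemma \ref{directional-riesz} and the uniqueness of the pressure in \eqref{riesz-equation}), needs to be written out.
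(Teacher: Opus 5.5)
Your proposal is correct and is essentially the paper's approach: the paper gives no argument of its own and cites Grafakos (Section 5.1) for this classical fact. You make the $p=2$ case explicit via Plancherel and $\abs{-i\xi_i/\abs{\xi}}\le 1$, which is exactly where $C_2=1$ comes from, and defer $p\neq 2$ to standard Calder\'on--Zygmund theory with Marcinkiewicz interpolation and duality, which is the kind of textbook result the paper is citing.
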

\begin{proof}
    See \cite[Section 5.1]{grafakos}.
\end{proof}
\begin{lem}[Gagliardo--Nirenberg inequality]\label{gagliardo}
    Let $f\in H^1(\R)$, and $q\in[2,\infty]$. Then there exists a constant $C_q>0$ such that
    \begin{equation*}
        \norm{f}_{L^q(\R)}\leq C_q\norm{f}_{L^2(\R)}^{1/2+1/q}\norm{f'}_{L^2(\R)}^{1/2-1/q}, \text{ where }1/\infty=0.
    \end{equation*}
\end{lem}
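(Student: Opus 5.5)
The statement to prove is the appendix Gagliardo--Nirenberg inequality (Lemma \ref{gagliardo}). The plan is to prove the endpoint $q=\infty$ first and then interpolate to every $q\in[2,\infty)$ by H\"older.

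\textbf{Endpoint $q=\infty$.} By density of $C^\infty_c(\R)$ in $H^1(\R)$, and continuity of $H^1(\R)$ functions (Sobolev embedding in one dimension), it suffices to take $f\in C^\infty_c(\R)$. By the fundamental theorem of calculus,
\begin{equation*}
f(x)^2=\int_{-\infty}^x 2f(y)f'(y)\,dy
\quad\text{and}\quad
f(x)^2=-\int_x^{\infty}2f(y)f'(y)\,dy .
\end{equation*}
Averaging the two identities and applying Cauchy--Schwarz gives
\begin{equation*}
\abs{f(x)}^2\leq \int_\R\abs{f}\abs{f'}\,dy\leq\norm{f}_{L^2}\norm{f'}_{L^2}.
\end{equation*}
Hence $\norm{f}_{L^\infty}\leq\norm{f}_{L^2}^{1/2}\norm{f'}_{L^2}^{1/2}$, which is the claim for $q=\infty$ with $C_\infty=1$. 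To pass to general $f\in H^1(\R)$, take $f_n\to f$ in $H^1$. The inequality applied to $f_n-f_m$ shows that $(f_n)$ is uniformly Cauchy, so $f_n\to f$ uniformly and the bound passes to the limit. Alternatively, one can bound $\norm{\widehat f}_{L^1}$ by splitting frequencies at $\abs{\xi}=\lambda$ and optimizing over $\lambda$, but the FTC route is more direct.

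\textbf{Interpolation for $2\leq q<\infty$.} Write $\abs{f}^q=\abs{f}^2\abs{f}^{q-2}$, so that
\begin{equation*}
\norm{f}_{L^q}^q\leq\norm{f}_{L^\infty}^{q-2}\norm{f}_{L^2}^2\leq\norm{f}_{L^2}^{(q-2)/2+2}\norm{f'}_{L^2}^{(q-2)/2}.
\end{equation*}
Taking $q$-th roots gives the exponent $1/2+1/q$ on $\norm{f}_{L^2}$ and $1/2-1/q$ on $\norm{f'}_{L^2}$, so the inequality holds with $C_q=1$. For $q=2$ the statement is trivial.

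The only step needing any care is the density argument, and it is routine. As a consistency check, the resulting exponents are exactly those used in Lemma \ref{prelim-regularity} (the $L^\infty$ case with constant $1$) and in Lemma \ref{directional-riesz} (the $L^4$ and $L^\infty$ cases).
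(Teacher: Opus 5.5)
Your proof is correct. It differs from the paper only in that the paper gives no argument of its own and simply cites the general Gagliardo--Nirenberg theorem from Nirenberg's lectures.

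You instead use the one-dimensional structure directly. The fundamental theorem of calculus for $f^2$, averaged over both half-lines, gives $\abs{f(x)}^2\leq\int_\R\abs{f}\abs{f'}\,dy$, hence the endpoint $q=\infty$ with $C_\infty=1$. The H\"older bound $\norm{f}_{L^q}^q\leq\norm{f}_{L^\infty}^{q-2}\norm{f}_{L^2}^2$ then produces exactly the exponents $1/2\pm1/q$ with $C_q=1$. Your density step is also fine; a.e.\ convergence of a subsequence would already suffice, so no prior appeal to the Sobolev embedding is needed.

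Since $(f^2)'=2ff'$ and $\abs{f(x)^2}=\abs{f(x)}^2$ also hold for complex-valued $f$, your argument applies verbatim to the function $A=\mathcal{F}^{-1}_1a$ in Lemma \ref{directional-riesz}. That function is in general complex-valued, so this matters for the paper's use of the lemma.

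The citation buys brevity and the full multi-dimensional, higher-order family of inequalities, but with unspecified constants and reliance on an external source. Your route is self-contained and yields explicit constants. It also covers every use of the lemma in the paper, including the mixed-norm bounds in Lemma \ref{prelim-regularity}: these follow by applying the $q=\infty$ case on a.e.\ slice and integrating with Cauchy--Schwarz.
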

\begin{proof}
    See \cite[Lecture II, Theorem, p. 125]{nirenberg}.
\end{proof}

\end{document}